\newcommand{\A}{\ensuremath{\mathbb{A}}}
\newcommand{\N}{\ensuremath{\mathbb{N}}}
\newcommand{\CC}{\ensuremath{\mathcal{C}}}
\newcommand{\DD}{\ensuremath{\mathcal{D}}}
\newcommand{\EE}{\ensuremath{\mathcal{E}}}
\newcommand{\op}[1]{\ensuremath{{#1}^{\mathsf{op}}}}
\newcommand{\pshx}[1]{\ensuremath{\mathsf{Set}^{\op{#1}}}}
\newcommand{\psh}[1]{\ensuremath{[\op{#1},\mathsf{Set}]}}
\newcommand{\Set}{\ensuremath{\mathsf{Set}}}
\newcommand{\Cat}{\ensuremath{\mathsf{Cat}}}
\newcommand{\Hom}{\ensuremath{\mathsf{Hom}}}
\newcommand{\y}{\ensuremath{\mathsf{y}}} 
\newcommand{\tp}{\ensuremath{\mathsf{tp}}}
\newcommand{\Tm}{\ensuremath{\mathsf{Tm}}}
\newcommand{\Ty}{\ensuremath{\mathsf{Ty}}}
\newcommand{\alg}[1]{\ensuremath{\mathsf{#1}}}
\newcommand{\hook}{\ensuremath{\hookrightarrow}}
\newcommand{\mono}{\ensuremath{\rightarrowtail}}
\renewcommand{\epi}{\ensuremath{\twoheadrightarrow}}
\renewcommand{\to}{\ensuremath{\rightarrow}}
\newcommand{\too}{\ensuremath{\longrightarrow}}
\newcommand{\tto}{\ensuremath{\rightrightarrows}}
\newcommand{\G}{\ensuremath{\Gamma}}
\newcommand{\terms}[2]{#1 \vdash #2}
\newcommand{\Gterms}[1]{\terms{\Gamma}{#1}}
\newcommand{\ext}[2]{{#1,#2}}
\newcommand{\Id}{\mathsf{Id}}
\newcommand{\refl}{\mathsf{refl}}
\newcommand{\U}{\ensuremath{\mathcal{U}}}
\newcommand{\UU}{\ensuremath{\,\dot{\mathcal{U}}}}
\renewcommand{\u}{\ensuremath{\mathsf{u}}}
\renewcommand{\t}{\ensuremath{\mathsf{u}}}
\newcommand{\T}{\ensuremath{\mathsf{U}}}
\newcommand{\TT}{\ensuremath{\dot{\mathsf{U}}}}
\newcommand{\n}{\ensuremath{\mathsf{n}}}
\renewcommand{\N}{\ensuremath{\mathsf{N}}}
\newcommand{\NN}{\ensuremath{\dot{\mathsf{N}}}}
\newcommand{\tT}{\ensuremath{{\t}:\TT\to\T}}
\newcommand{\V}{\ensuremath{\mathcal{V}}}
\newcommand{\VV}{\ensuremath{\dot{\mathcal{V}}}}
\newcommand{\SSet}{\ensuremath{\,\dot{\Set}}}
\newcommand{\pbcorner}{\arrow[dr,phantom,"\lrcorner" very near start, shift right=.5ex]} 
\newcommand{\pbcornerright}{\arrow[dl,phantom,"\llcorner" very near start, shift left=.5ex]} 
\newtheorem{theorem}{Theorem}
\newtheorem*{theorem*}{Theorem}
\newtheorem{proposition}[theorem]{Proposition} 
\newtheorem{lemma}[theorem]{Lemma}
\theoremstyle{remark}
\newtheorem{remark}[theorem]{Remark} 
\newtheorem*{remarks*}{Remarks}
\newtheorem{example}[theorem]{Example}
\theoremstyle{definition}
\newtheorem{definition}[theorem]{Definition}
\begin{document}

\title{Algebraic Type Theory\\
Part 1: {M}artin-{L}\"of algebras}
\author{Steve Awodey}
\date{\today}
\dedicatory{In memory of Phil Scott, friend and mentor.}

\begin{abstract}
A new algebraic treatment of dependent type theory is proposed using ideas derived from topos theory and algebraic set theory.
\end{abstract}
\maketitle

One of the most beautiful aspects of the book \emph{Introduction to Higher-Order Categorical Logic} by Lambek and Scott is the almost entirely algebraic treatment of higher-order logic, with operations and equations in place of the traditional presentation of logic by rules of inference.   This is made possible in a general way by F.W.\ Lawvere's profound analysis of all of the logical primitives as adjoints \cite{Lawvere:adjointness}, but more specifically by the presence in a topos $\EE$ of a subobject classifier $\Omega$ that represents the presheaf of subobjects via a natural isomorphism of sets,
\[
\mathsf{Sub}(X) \cong \Hom_{\EE}(X, \Omega)\,.
\]
The logical operations on subobjects $\{x\,|\, \varphi(x)\} \mono X$, represented by formulas $\varphi(x) : X \to \Omega$, are themselves represented by operations on $\Omega$, such as conjunction $\wedge : \Omega \times \Omega \to \Omega$.  Thus given two ``propositional functions'' $\varphi(x), \psi(x) : X \to \Omega$ we obtain the meet of their subobjects $\{x\,|\, \varphi(x)\}\cap \{x\,|\, \psi(x)\}$ from the conjunction $\varphi(x)\wedge\psi(x)$ as 
\[
\{x\,|\, \varphi(x)\}\cap \{x\,|\, \psi(x)\} = \{x\,|\, \varphi(x) \wedge \psi(x)\} \,.
\]
The conjunction arises simply by (pairing and) composing:
\begin{equation*}
\begin{tikzcd}
 X \ar[rr, "{\langle \varphi(x), \psi(x) \rangle}" ] \ar[rrd,swap,  "\varphi(x)\wedge\psi(x)"] && \Omega \times \Omega \ar[d, "\wedge"] \\
 && \Omega 
\end{tikzcd}
\end{equation*}
It follows that the operation $\varphi(x)[t(y)/x] = \varphi(t(y))$ of substitution of a term $t(y) : Y\to X$ for the variable $x$ necessarily respects  conjunction, just by the associativity of composition:
\begin{align*}
(\varphi(x)\wedge\psi(x))[t(y)/x] &= (\wedge \circ \langle \varphi(x), \psi(x) \rangle) \circ t(y)\\
&= \wedge \circ (\langle \varphi(x), \psi(x) \rangle \circ t(y))\\
&= \wedge \circ \langle \varphi(x)\circ t(y), \psi(x)\circ t(y) \rangle\\
&=  \wedge \circ  \langle \varphi(t(y)), \psi(t(y))\rangle \\
&=  \varphi(t(y))\wedge\psi(t(y)) 
\end{align*}
It follows that the corresponding meet operation $\cap$ on subobjects also respects substitution, which is interpreted by pullback of subobjects.

The same thing holds for all of the propositional operations $\vee, \Rightarrow,$ etc.  And it also results in the Beck-Chevalley condition for the quantifiers $\forall$ and $\exists$, e.g.\
\begin{align*}
(\forall z .\vartheta(x,z))[t(y)/x] &= (\forall z .\vartheta(x,z))\circ t(y) \\
&= (\forall z \circ \vartheta(x,\hat{z}))\circ t(y)\\
&= \forall z \circ (\vartheta(x,\hat{z})\circ t(y))\\
&= \forall z \circ \vartheta(t(y),\hat{z})\\
&= \forall z .\vartheta(t(y),z)\,,
\end{align*}
in virtue of the universal quantifier $\forall z$ also being represented by a map on $\Omega$, namely $\forall_z : \Omega^{Z} \to \Omega$, with which we simply compose (this time after an exponential transposition).
\begin{equation}\label{diag:BCforQ}
\begin{tikzcd}
 X \ar[rr, "{\vartheta(x,\hat{z})}" ] \ar[rrd,swap,  "{\forall z. \vartheta(x,z)}"] && \Omega^{Z} \ar[d, "{\forall z}"] \\
 && \Omega
\end{tikzcd}
\end{equation}
Again, the corresponding equations then hold also for the subobjects classified.

The general idea is that, because they are natural in the context of variables~$X$, the logical operations on subobjects are represented by ``homming in'' to an algebra of propositions $\Omega$ (by Yoneda, of course). And since they are then just pointwise operations on propositional functions $\varphi(x) : X \to \Omega$, they automatically respect substitutions of terms $t(y): Y \to X$  into the context of variables.  In this way,  the internal logic of a topos arises almost entirely from homming into the internal \emph{complete Heyting algebra} $\Omega$ --- combined with some basic $\lambda$-calculus, enabling higher types constructed from $\Omega$.  This, in a nut shell, is what permits the lovely algebraic formulation of even higher-order logic in \cite{LS:1988}.  

\subsection*{Martin-L\"of algebras.} One of the motivations for the present work was to apply this same approach to \emph{dependent type theory} in place of predicate logic, by determining a suitable algebraic gadget $\U$ in place of $\Omega$, representing the \emph{presheaf of types}, rather than the presheaf of subobjects.  In fibrational terms, over the category $\CC$ of contexts of variables, we would like a representing object $\UU \to \U$ for the \emph{codomain fibration} $\CC^\downarrow \epi \CC$, rather than the object $1\mono \Omega$ representing the fibration of subobjects.  Unlike the discrete fibration, or presheaf $\mathsf{Sub} : \op{\CC} \to \Set$, of subobjects, however, which is (at best) poset valued, the pseudofunctor of slice categories $\CC/_{\!(-)} : \op{\CC} \to \mathsf{Cat}$ cannot be representable, even in the weaker sense of a natural equivalence of indexed categories,
\[
\CC/_{X}\, \simeq\, \Hom_{\CC}(X, \U)\,.
\]

There are really two different problems here: size and coherence of structure.  We solve both simultaneously by taking $\UU \to \U$ to be a full internal subcategory (with suitable additional structure) in the category $\EE = \widehat{\CC}$ of presheaves over $\CC$, essentially by splitting the codomain fibration as in \cite{Lums-Warren}.  Whatever one may think of this solution, it is of obvious interest to determine what additional algebraic structure on $\UU \to \U$ will serve to model dependent type theory. And we know how to find the answer: by Yoneda!  We call the resulting gadget a \emph{Martin-L\"of algebra} (``ML-algebra'' for short), and it has a remarkably simple description as an algebra for a polynomial monad, giving a complete answer to our question (Theorem \ref{thm:nmcwf}).

The polynomial endofunctor in question $\alg{P}_\t : \EE \to \EE$  is that of the algebra $\u:\UU \to \U$, which therefore has the form
\[
\alg{P}_{\u}(X) = \Sigma_{A: \U} X^A\,,
\]
allowing e.g.\ the formation rule for the $\Pi$-type to be expressed by a composition:
\begin{equation*}
\begin{tikzcd}
 \Gamma \ar[rr, "{(A,B)}" ] \ar[rrd,swap,  "{\Pi_A B}"] &&\alg{P}_\u (\U) \ar[d, "{\Pi}"] \\
 && \U
\end{tikzcd}
\end{equation*} 
This ensures not only the strict Beck-Chevalley rules for the type formers, as in \eqref{diag:BCforQ}, but also a solution (due originally to Voevodsky) to the old bugbear of coherence in dependent type theory \cite{Hofmann:1994}, since substitution $\sigma : \Delta \to \Gamma$ into types and terms $\Gamma \vdash a:A$ is now interpreted simply by composition, which unlike pullback, is strictly associative.
\[
\begin{tikzcd}
 	&&&  {\UU} \ar[d]\ar[d]\\
\Delta \ar[r,  "{\sigma}"] \ar[rrru, bend left = 4ex, "{a[\sigma]}"]\ar[rrr, swap, bend right = 5ex, "{A[\sigma]}"] & \Gamma \ar[rru, "a"]   \ar[rru, swap, "{\ \ a:A}"]  \ar[rr ,swap,  "A"]  && {\U}
\end{tikzcd}
\]

The full rules for $\Pi$-types turn out to state exactly that $\u : \UU\to\U$ is an algebra for its own polynomial endofunctor $\alg{P}_{\u} : \widehat{\CC} \to \widehat{\CC}$ lifted to the (cartasian) arrow category $\alg{P}_{\u}^{\downarrow}: \widehat{\CC}^{\,\downarrow}_{\mathsf{cart}} \too \widehat{\CC}^{\,\downarrow}_{\mathsf{cart}}$.
Even more strikingly, the type formers $1, \Sigma$ endow $\alg{P}_{\u} : \widehat{\CC} \to \widehat{\CC}$ with the underlying structure of a polynomial monad.  The monad and algebra laws are then seen to express fundamental type isomorphisms, the analysis of which requires the important $\Id{}$-types (as recently reformulated in terms of polynomials by R.\ Garner), which forms one of the main new advances of the current work.

\subsection*{Algebraic type theory} There turns out to be an intriguing analogy between ML-algebras and the Zermelo-Fraenkel algebras from the \emph{Algebraic Set Theory} of Joyal and Moerdijk \cite{JM:AST}.  A second motivation for the present work was to explore this analogy, which is discernible in the similarity between the universal small map $\pi : E \to B$ and an ML-algebra $\u : \UU \to \U$, and between a ZF-algebra structure map $P_sV \to V$ (especially in the formulation of \cite{Simpson:1999}) and the $\Sigma$-type former $\alg{P}_{\!\u}\, \U \to \U$, as well as between the respective monads $P_s$ and $\alg{P}_{\!\u}$. 

Indeed, an ML-algebra is in some sense a proof-relevant version of the ZF-algebras from \emph{op.\ cit.}  To be sure, only the most basic aspects of this connection have been developed here.   Apart from the obviously missing successor operation $s : V\to V$, one still needs to consider morphisms of ML-algebras, free and initial algebras, as well as the functor induced by a change of context $\CC \to \CC'$. Some of this is underway in the work in progress \cite{Fernando}.
\medskip

We begin in Section 1 below by recalling from \cite{awodey:NM} the notion of a \emph{natural model} of dependent type theory and its relation to the \emph{categories with families} of \cite{Dybjer:CWF}.  In section \ref{sec:MLalgebras} we abstract the main features of a natural model with the type formers $\mathsf{1}, \Sigma, \Pi$ to form the notion of a \emph{Martin-L\"of algebra}, the basic theory of which is also begun in this section with the addition of identity types.   After briefly indicating the relation to the \emph{tribes} of Joyal \cite{Joyal:CandT} we provide some examples of ML-algebras in Section \ref{sec:ExamplesML-Algebras}, including the subobject classifier $\Omega$ of a topos, and the Hofmann-Streicher universe $\VV\to\V$ in presheaves.  The rest of the paper is devoted to the relationship between ML-algebras and polynomial monads, which was already considered from a somewhat different point of view in \cite{ Newstead:thesis} and \cite{NA:2018}.  We conclude with the main new result, Theorem \ref{thm:MLAlgbicat}, in Section \ref{sec:Monad}. 

\section{Natural models of type theory}\label{sec:NaturalModels}

\noindent We write $\widehat{\CC} = \psh{\CC}$ for the category of presheaves on a small category~$\CC$. 
In \cite{awodey:NM}, a \emph{natural model of type theory} is defined to be a representable natural transformation $\t: \TT\to \T$ of presheaves in $\widehat{\CC}$.  

\begin{definition}\label{def:representablenattrans} A natural transformation $p : Y \to X$ of presheaves on a category $\CC$  is \emph{representable} if the pullback of $p$ along any element $x: \y{C} \to X$ is representable.
\begin{equation*}
\begin{tikzcd}
 \y{D} \ar[d,swap,"{\y{c}}"] \ar[r,"y"] \pbcorner &  Y \ar[d, "p"]\\
\y{C} \ar[r,swap,"x"]  & X
\end{tikzcd}
\end{equation*}
We may assume a choice of pullback data $c : D \to C$ in $\CC$ and $y \in Y(D)$ for all $x\in X(C)$ (but no coherence conditions).
\end{definition}

Proposition 2 of \emph{op.\ cit.} shows that such a map is essentially the same thing as a \emph{category with families} (CwF) in the sense of \cite{Dybjer:CWF} when $\CC$ is regarded as the category of contexts of a type theory, $\T: \op{\CC} \to \Set$ is regarded as the presheaf of types in context, $\TT: \op{\CC} \to \Set$ as the presheaf of terms in context, and $\t : \TT\to\T$ as the typing of the terms.   

\begin{proposition}[\cite{awodey:NM, Fiore:2012}] A representable natural transformation is the same thing as a \emph{category with families (CwF)} in the sense of Dybjer \cite{Dybjer:CWF}.
\end{proposition}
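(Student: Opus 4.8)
The plan is to set up a dictionary between the two structures over a fixed base category $\CC$ --- which on both sides is the category of contexts, its terminal object playing the role of the empty context --- and to check that the defining universal properties correspond under the Yoneda lemma. Given a representable natural transformation $\t : \TT \to \T$ in $\widehat{\CC}$, I read $\T$ as the presheaf $\Ty$ of types-in-context, $\TT$ as the presheaf $\Tm$ of terms-in-context, and $\t$ as the typing map. By Yoneda an element $A \in \T(C)$ is the same thing as a map $A : \y{C} \to \T$, and likewise a term is a map into $\TT$; the naturality of $\t$ records that typing is stable under substitution. Since $\T, \TT$ are presheaves and $\t$ is a morphism of presheaves, all of the functorial data of a CwF --- the two contravariant functors of types and terms together with the typing transformation between them --- is present immediately, and the only real content lies in the comprehension structure.

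For comprehension I use representability. For each context $C$ and type $A : \y{C} \to \T$, Definition \ref{def:representablenattrans} supplies a chosen pullback of $\t$ along $A$ with a representing object, which I name $\ext{C}{A}$:
\begin{equation*}
\begin{tikzcd}
 \y{(\ext{C}{A})} \ar[d,swap,"{\y{p_A}}"] \ar[r,"{q_A}"] \pbcorner & \TT \ar[d, "\t"]\\
\y{C} \ar[r,swap,"A"]  & \T
\end{tikzcd}
\end{equation*}
By full faithfulness of the Yoneda embedding $\y$, the left leg is a morphism $p_A : \ext{C}{A} \to C$ of $\CC$, the display map, and by Yoneda the top leg is a term $q_A \in \TT(\ext{C}{A})$, the generic variable, with $\t(q_A) = A[p_A]$. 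Unwinding the pullback universal property through Yoneda yields exactly Dybjer's context-extension axiom: for every $\sigma : \Delta \to C$ and every $a \in \TT(\Delta)$ with $\t(a) = A[\sigma]$ there is a unique $\pair{\sigma, a} : \Delta \to \ext{C}{A}$ satisfying $p_A \circ \pair{\sigma,a} = \sigma$ and $q_A[\pair{\sigma,a}] = a$.

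Conversely, from a CwF I put $\T = \Ty$, $\TT = \Tm$ and let $\t$ be the typing transformation; I then claim the square assembled from $p_A$ and $q_A$ witnesses representability. Because limits in $\widehat{\CC}$ are computed pointwise, it suffices to verify at each object $D$ that $\Hom_\CC(D, \ext{C}{A})$ is the set of pairs $(\sigma, a)$ with $\sigma : D \to C$ and $a \in \Tm(D)$ of type $A[\sigma]$, and this is precisely the comprehension property evaluated at $D$. The two passages are mutually inverse essentially on the nose: the chosen pullback data of Definition \ref{def:representablenattrans} correspond bijectively to the chosen extension data of the CwF, and the explicit remark that the former carry ``no coherence conditions'' matches the fact that the latter choices are equally un-normalized.

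I expect the only real work to be the bookkeeping of these repeated Yoneda translations, and in particular the point that a pullback in the presheaf category $\widehat{\CC}$ must a priori be tested against \emph{arbitrary} presheaves, whereas the comprehension universal property quantifies only over morphisms out of representables $\y{\Delta}$. The pointwise computation of pullbacks in $\widehat{\CC}$ dissolves this apparent gap cleanly, so I anticipate no genuine difficulty --- only care in keeping the chosen, non-coherent data aligned on the two sides.
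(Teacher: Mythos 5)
Your proposal is correct and follows essentially the same route as the paper: the Yoneda dictionary identifying $\T$ with $\Ty$, $\TT$ with $\Tm$, and $\t$ with the typing map, with Dybjer's comprehension/context-extension structure corresponding precisely to the chosen pullbacks of Definition~\ref{def:representablenattrans}. In fact the paper only sketches this correspondence (deferring the converse to \cite{awodey:NM}, where Dybjer's term presheaf on $\int_{\CC}\Ty$ is first repackaged as a map $\tp:\Tm\to\Ty$ via $\pshx{\CC}\!/_{\Ty}\simeq\pshx{(\int_{\CC}\Ty)}$), and your observation that pointwise computation of pullbacks in $\widehat{\CC}$ closes the apparent gap between testing against arbitrary presheaves and testing against representables is exactly the right justification for the step the paper leaves implicit.
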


We sketch the correspondence from \cite{awodey:NM}.
Let us write the objects and arrows of \CC\ as $\sigma : \Delta \to \Gamma$, giving the \emph{category of contexts and substitutions}.
A CwF is usually defined as a presheaf of \emph{types in context}, 
\[
\Ty : \CC^{\mathrm{op}}\to \Set\,,
\]
together with a presheaf of \emph{typed terms in context}, 
\[
\Tm : {\textstyle (\int_{\CC}\!\Ty)^{\mathsf{op}}\to \Set }\,.
\]
We reformulate this using the familiar equivalence
\[\textstyle
 \pshx{\CC}\!/_{\Ty}\ \simeq\ \pshx{(\int_{\CC}\Ty)}
\]
in order to obtain a map $\tp :\Tm \to \Ty$.

Formally, we then interpret:
\begin{align*}
 \Ty(\Gamma) &= \{ A \,|\, \Gamma \vdash A\} \\
 \Tm(\Gamma) &= \{ a \,|\, \Gamma \vdash a:A\} 
\end{align*}
Under the Yoneda lemma we therefore have a bijective correspondence:
\begin{align*}
\Gamma\vdash A\quad &\approx\quad A : \y{\Gamma}\to \Ty  \\
\Gterms{a:A}\quad &\approx\quad  a: \y{\Gamma}\to \Tm  \qquad (\tp\circ a = A)
\end{align*}
as indicated in the following.
\[
\begin{tikzcd}
 	&&  {\Tm} \ar[d,"{\tp}"]\\
\y\Gamma \ar[rru, "a"]   \ar[rr ,swap,  "A"]   && {\Ty}
\end{tikzcd}
\]

The action of a substitution of contexts $\sigma : \Delta \to \Gamma$ on types and terms,
\[
\frac{\sigma: \Delta \to \Gamma, \quad \Gamma\vdash a:A}{\Delta\vdash a[\sigma] : A[\sigma]}
\]
is then interpreted simply as composition:
\[
\begin{tikzcd}
 	&&&  {\Tm} \ar[d,"{\tp}"]\\
\y{\Delta} \ar[r,  "\y{\sigma}"] \ar[rrru, bend left = 4ex, "{a[\sigma]}"] \ar[rrr, swap, bend right = 5ex, "{A[\sigma]}"] & \y\Gamma \ar[rru, "a"]   \ar[rr ,swap,  "A"]  && {\Ty}
\end{tikzcd}
\]
We may hereafter omit the $\y$ for the Yoneda embedding, letting the Greek letters serve to distinguish representable presheaves and their maps.

The CwF operation of \emph{context extension} 
$$\frac{\quad\Gamma\vdash A\quad}{\ \ \ext{\G}{A}\vdash}$$
is modeled by the representability of $\tp : \Tm\to\Ty$ as follows.
Given $\Gamma\vdash A$ we need a new context $\ext{\G}{A}$ together with a substitution $\pi_A : \ext{\G}{A} \to \G$ and a term 
\[
\terms{\ext{\G}{A}}{q_A:A[\pi_A]}\,,
\]
where the substitution $A[\pi_A]$ is modeled by composition $A[\pi_A] = A\circ \pi_A$.

Let $\pi_A: \ext{\G}{A} \to \G$ be the pullback of $\tp$ along $A$, which exists as an arrow in $\CC$ since $\tp$ is representable.
\begin{equation}\label{diag:NMpullback}
\begin{tikzcd}
{\ext{\G}{A}} \ar[d,swap, "{\pi_A}"] \ar[r, "{q_A}"] \pbcorner &  {\Tm} \ar[d, "{\tp}"]\\
{\G} \ar[r,swap,"A"]   & {\Ty}
\end{tikzcd}
\end{equation}
The map $q_A : \ext{\G}{A}\to\Tm$ arising from the pullback gives the required term $\terms{\ext{\G}{A}}{q_A:A[\pi_A]}$ since $\tp\circ q_A = A \circ \pi_A = A[\pi_A]$.
The remaining laws of a CwF follow from the pullback condition on $\eqref{diag:NMpullback}$; see \cite{awodey:NM}.

\subsection{Modeling the type formers}

Given a natural model $\tT$, we will make extensive use of the associated \emph{polynomial endofunctor}
$\alg{P}_\t : \widehat\CC \too \widehat\CC$ (cf.\ \cite{GambinoKoch:2013}), defined by 
\[
\alg{P}_\t = \T_! \circ \t_* \circ \TT^* : \widehat{\CC}\too\widehat{\CC}\,,
\] 
as indicated below.
\begin{equation*}
\xymatrix{
  \Set^{\CC^\mathsf{op}} \ar[d]_{\TT^*} \ar[rr]^{\alg{P}_\t} && \Set^{\CC^\mathsf{op}} \\
  \Set^{\CC^\mathsf{op}}\!/_{\TT} \ar[rr]_{{\t}_*} && \Set^{\CC^\mathsf{op}}\!/_\T \ar[u]_{{\T}_!} 
 }
\end{equation*}

The action of $\alg{P}_\t$ on an object $X$ may be depicted:
\begin{equation*}
\xymatrix{
 X & \ar[l] X\times \TT \ar[d] && \alg{P}_\t(X) \ar[d] \\
	& \TT \ar[rr]_{\t} && \T 
 }\qquad\quad
 \end{equation*}

We call $\tT$ the \emph{signature} of $\alg{P}_\t$ and briefly recall the following \emph{universal mapping property} from \cite{awodey:NM}.

\begin{lemma}
 For $p: E \to B$ in a locally cartesian closed category $\EE$ we have the following universal property of the polynomial functor $\alg{P}_\t$. For any objects $X, Z \in \EE$, maps $f : Z \to \alg{P}_p (X)$ correspond bijectively to pairs of maps $f_1 : Z \to B$ and $f_2 : Z\times_B E \to Z$, as indicated below.
\begin{equation}\label{diag:polyUMP}
\begin{tikzcd}
& Z \ar[rr,"f"] && \alg{P}_p (X) & \\[-5ex]
\strut \ar[rrrr, equal] &&&& \strut\\[-5ex]
&X & \ar[l,"f_2"] Z\times_B E \ar[d] \ar[r] \pbcorner & E \ar[d, "p"] &\\
&& Z \ar[r,swap,"f_1"] & B &
\end{tikzcd}
\end{equation}
The correspondence is natural in both $X$ and $Z$, in the expected sense.
\end{lemma}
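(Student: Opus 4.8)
The plan is to establish the bijection by composing the three defining adjunctions of $\alg{P}_p = B_! \circ p_* \circ E^*$, reading off the two components $f_1$ and $f_2$ as we go. Throughout, $E^* : \EE \to \EE/E$ denotes pullback along the terminal projection $E \to 1$ (so $E^* X = (\pi_E : X \times E \to E)$), the functor $p_* : \EE/E \to \EE/B$ is the right adjoint to $p^*$ supplied by local cartesian closure, and $B_! : \EE/B \to \EE$ is the domain (forgetful) functor, left adjoint to $B^*$.

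First I would dispose of the outer functor $B_!$. Since $B_!$ merely forgets the structure map, a morphism $f : Z \to \alg{P}_p(X) = B_!(p_* E^* X)$ in $\EE$ is the same datum as a choice of $f_1 := \pi \circ f : Z \to B$, where $\pi : p_* E^* X \to B$ is the structure map, together with $f$ itself now read as a morphism $\bar f : (Z, f_1) \to p_* E^* X$ in the slice $\EE/B$. That is,
\[
\Hom_\EE(Z, \alg{P}_p(X)) \;\cong\; \coprod_{f_1 : Z \to B} \Hom_{\EE/B}\bigl((Z, f_1),\, p_* E^* X\bigr)\,,
\]
the disjoint union recording that the base point $f_1$ over which one works is part of the data. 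This is the one step where care is required: $B_!$ is a \emph{left} adjoint, so it cannot simply be transposed out of the first argument, and the correspondence is genuinely fibered over the set of maps $f_1$.

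Next I would transpose across $p^* \dashv p_*$, obtaining
\[
\Hom_{\EE/B}\bigl((Z, f_1), p_* E^* X\bigr) \;\cong\; \Hom_{\EE/E}\bigl(p^*(Z, f_1),\, E^* X\bigr)\,,
\]
and here I would identify $p^*(Z, f_1)$, the pullback of $f_1 : Z \to B$ along $p : E \to B$, with the object $(Z \times_B E \to E)$ appearing in the lower square of \eqref{diag:polyUMP}. Finally, transposing across $E_! \dashv E^*$ (with $E_!$ again the forgetful functor) gives
\[
\Hom_{\EE/E}\bigl(Z \times_B E,\, E^* X\bigr) \;\cong\; \Hom_\EE\bigl(E_!(Z \times_B E),\, X\bigr) \;=\; \Hom_\EE(Z \times_B E,\, X)\,,
\]
and a map $Z \times_B E \to X$ is exactly the second component $f_2$. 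Composing the three displayed bijections yields the claimed correspondence $f \leftrightarrow (f_1, f_2)$.

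It then remains to check naturality in $X$ and $Z$, which I expect to be routine: each of the three bijections is natural, the adjoint transposes in both variables and the $B_!$-decomposition by functoriality of post-composition (in $Z$) and because $X$ only enters at the last step. The sole delicate point, as flagged above, is the bookkeeping of how the pullback $Z \times_B E$, and hence the domain of $f_2$, depends on the chosen $f_1$; once this fibered viewpoint is fixed, the remaining verifications are the standard naturality squares for adjoint transposition.
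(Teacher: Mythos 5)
Your proof is correct: the paper itself states this lemma without proof (recalling it from \cite{awodey:NM}), and your argument --- unwinding the definition $\alg{P}_p = B_! \circ p_* \circ E^*$ and transposing across $p^* \dashv p_*$ and $E_! \dashv E^*$, with the coproduct decomposition of $\Hom_\EE(Z, B_!(-))$ fibered over $f_1$ correctly handling the fact that $B_!$ is a left adjoint --- is exactly the standard proof implicit in the paper's own definition of $\alg{P}_p$. One incidental point in your favor: the lemma's prose says $f_2 : Z\times_B E \to Z$, which is a typo for $f_2 : Z\times_B E \to X$, and you correctly read the intended codomain off the diagram.
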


This universal property is also suggested by the conventional type theoretic notation, namely: $$\alg{P}_p (X) = \Sigma_{b:B} X^{E_b}$$
The lemma can be used to determine the signature $\, p\cdot q\, $ for the composite $\alg{P}_p \circ \alg{P}_q$ of two polynomial functors, which is again polynomial, and for which we have
\begin{equation}\label{eq:polycomp}
\alg{P}_{p\cdot q}\, \cong\, \alg{P}_p \circ \alg{P}_q\,.
\end{equation}
Indeed, let $p : B \to A$ and $q : D \to C$, and consider the following diagram resulting from applying the correspondence \eqref{diag:polyUMP} to the identity arrow,
\[
\langle a, c\rangle = 1_{\alg{P}_p(C)} : \alg{P}_p(C) \to \alg{P}_p(C)\,,
\]
and taking $Q$ to be the indicated pullback.
\begin{equation}\label{diag:polyUMP2}
\begin{tikzcd}
D \ar[d,swap,"q"] & \ar[l] Q \ar[d] \pbcornerright  \ar[dd, dotted, bend left = 10ex ,near start, "{p\cdot q}"]& & \\
 C  & \ar[l,"c"] \pi^*B \ar[d] \ar[r] \pbcorner & B \ar[d,"p"] \\
&  \alg{P}_p(C) \ar[r,swap,"a"] & A
\end{tikzcd}
\end{equation}
The map $p\cdot q$ is then defined to be the indicated composite,
\[
p\cdot q\ =\ a^*p \circ c^*q\,.
\]
The condition \eqref{eq:polycomp} can then be checked using the correspondence \eqref{diag:polyUMP} (also see \cite{GambinoKoch:2013}).

\begin{definition}\label{def:modelthetypeformers}
A natural model $\tT$ over $\CC$ will be said to \emph{model} the type formers $ \mathsf{1}, \Sigma, \Pi$ if there are pullback squares in $\widehat{\CC}$ of the following form,
%
\begin{equation}\label{diag:naturalmodelPiSigma1}
\begin{tikzcd}
	1 \ar[r] \ar[d] \pbcorner &  \TT \ar[d, "\t"]\\  
	1 \ar[r] & \T
 \end{tikzcd} \qquad \quad 
 \begin{tikzcd}
	\TT_2  \ar[r] \ar[d,swap,"\t\cdot\t"] \pbcorner &  \TT \ar[d, "\t"]\\  
	\T_2 \ar[r] & \T
	 \end{tikzcd} \qquad \quad 
	 \begin{tikzcd}
	 \alg{P}_\t(\TT) \ar[r] \ar[d,swap,"\alg{P}_\t(\t)"] \pbcorner &  \TT \ar[d, "\t"]\\  
	\alg{P}_\t(\T) \ar[r] & \T
 \end{tikzcd} 
 \end{equation}
where $\t\cdot\t : \dot{\T}_2 \to \T_2$ is determined by $\alg{P}_{\t\cdot\t} \cong  \alg{P}_{\t} \circ \alg{P}_{\t}$ as in \eqref{eq:polycomp}.
\end{definition}
The terminology is justified by the following result from \cite[Theorem 16]{awodey:NM}.
\begin{theorem}\label{thm:nmcwf}
Let $\tT$ be a natural model. The associated category with families satisfies the usual rules for the type-formers $\mathsf{1}, \Sigma, \Pi$ just if $\tT$ models the same in the sense of Definition \ref{def:modelthetypeformers}.
\end{theorem}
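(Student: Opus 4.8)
The plan is to establish the equivalence by unwinding, for each of the three type formers in turn, exactly what the pullback square in Definition \ref{def:modelthetypeformers} asserts, and to match it against the corresponding rules of the associated category with families. The strategy throughout is to use the Yoneda lemma to reduce a statement about a pullback square of presheaves to a statement about elements in context $\Gamma$, and then to use the universal property of the pullback \eqref{diag:NMpullback} defining context extension to recognize these elements as the formation, introduction, elimination, and computation data of the type former. Since each square is a pullback, the universal property delivers both the existence of the required terms and the uniqueness (i.e.\ the $\eta$- and uniqueness rules) in one stroke, while naturality of all the maps involved encodes the strict Beck--Chevalley/substitution conditions automatically, as explained in the introduction.

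I would begin with the simplest case, the unit type $\mathsf{1}$. Here the left-hand square of \eqref{diag:naturalmodelPiSigma1} says that the element $\mathsf{1} : 1 \to \T$ is classified so that its extension $\ext{\Gamma}{\mathsf{1}}$ is terminal among the relevant data; by Yoneda the top map picks out a distinguished term $\ast : \mathsf{1}$, and the pullback condition forces any two terms of the unit type to agree, yielding the formation, introduction, and uniqueness rules. Next I would treat $\Sigma$-types via the middle square. The key is the identification of $\tp \cdot \tp : \dot{\T}_2 \to \T_2$ from \eqref{eq:polycomp}: by the composite-polynomial construction \eqref{diag:polyUMP2}, an element of $\T_2$ in context $\Gamma$ is exactly a pair $(A, B)$ with $\Gamma \vdash A$ and $\ext{\Gamma}{A} \vdash B$, and an element of $\dot{\T}_2$ is such a pair together with a term of the second type over a term of the first. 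The pullback square then exhibits $\Sigma_A B$ as a type whose terms are precisely such pairs, giving the pairing constructor and the projections together with their $\beta$- and $\eta$-laws.

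The case that will carry the real weight is $\Pi$, governed by the right-hand square. Here I would lean on the universal mapping property of the polynomial functor (the Lemma on \eqref{diag:polyUMP}): a map $\Gamma \to \alg{P}_\t(\T)$ corresponds to a pair $(A, B)$ with $\Gamma \vdash A$ and $\ext{\Gamma}{A} \vdash B$, exactly the input to the $\Pi$-formation rule, while a map $\Gamma \to \alg{P}_\t(\TT)$ lying over it corresponds additionally to a dependent term $\ext{\Gamma}{A} \vdash b : B$, i.e.\ the input to $\lambda$-abstraction. The pullback square then says that terms $\Gamma \vdash f : \Pi_A B$ correspond bijectively to such families $b$, which is precisely the combined content of $\Pi$-introduction ($\lambda$), $\Pi$-elimination (application), and the $\beta$- and $\eta$-rules. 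The main obstacle I anticipate is bookkeeping rather than conceptual: I must check that the bijection supplied by the Lemma is genuinely natural in the context $\Gamma$, so that application commutes with substitution and the Beck--Chevalley condition holds on the nose, and I must verify that the pullback's universal property matches the $\eta$-rule and not merely $\beta$. Throughout, I would suppress the routine verifications that naturality transports correctly under the equivalence $\widehat{\CC}/_{\Ty} \simeq \pshat{\int_{\CC}\Ty}$ used to set up the CwF correspondence, citing \cite{awodey:NM} for the details, and present only the identifications that reveal why each pullback square says exactly what the corresponding type-former rules demand.
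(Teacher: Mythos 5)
Your proposal is correct and follows essentially the same route as the paper: the paper's own proof is likewise a sketch that unwinds the pullback squares via Yoneda and the universal mapping property of $\alg{P}_\t$, treating only the $\Pi$-case in detail (with maps $\Gamma \to \alg{P}_\t(\T)$ as pairs $(A,B)$, maps $\Gamma \to \alg{P}_\t(\TT)$ as $(A,b)$, the horizontal arrows as formation and introduction, and the pullback property as elimination plus the computation rules) while deferring $\mathsf{1}$ and $\Sigma$ to \cite{awodey:NM, Newstead:thesis, NA:2018}. Your additional sketches of the $\mathsf{1}$ and $\Sigma$ cases, and your flagging of the naturality/Beck--Chevalley bookkeeping and the $\eta$-versus-$\beta$ distinction, are consistent with how those details are handled in the cited references.
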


We only sketch the case of $\Pi$-types; the other type formers are treated in detail in \cite{awodey:NM, Newstead:thesis, NA:2018}.

\begin{proposition}  The natural model $\tT$ models $\Pi$-types just if there are maps $\lambda$ and $\Pi$ making the following a pullback diagram. 
\begin{equation}\label{diag:prod2}
\begin{tikzcd}
\alg{P}_\t(\TT)  \ar[d,swap, "{\alg{P}_\t(\t)}"]   \ar[r, "\lambda"]  \pbcorner &  \TT \ar[d, "{\t}"]\\
\alg{P}_\t(\T)	\ar[r,swap, "{\Pi}"] & \T
\end{tikzcd}
\end{equation}
\end{proposition}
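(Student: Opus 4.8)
The plan is to decode both the existence of the maps $\Pi,\lambda$ and the pullback condition through the universal property of the polynomial functor (the preceding Lemma), applied to the signature $p=\t$, and then to match the resulting data against the formation, introduction, application, $\beta$- and $\eta$-rules for $\Pi$-types.

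First I would spell out what maps into $\alg{P}_\t(\T)$ and $\alg{P}_\t(\TT)$ mean type-theoretically. Applying \eqref{diag:polyUMP} with $p=\t$ and $Z=\Gamma$, a map $\Gamma\to\alg{P}_\t(\T)$ is the same as a pair consisting of $A:\Gamma\to\T$ and a map $\Gamma\times_\T\TT\to\T$. By the context-extension pullback \eqref{diag:NMpullback}, $\Gamma\times_\T\TT$ is precisely $\ext{\Gamma}{A}$, so the second component is a type $\ext{\Gamma}{A}\vdash B$. Hence maps $\Gamma\to\alg{P}_\t(\T)$ are exactly the pairs $(A,B)$ with $\Gamma\vdash A$ and $\Gamma,A\vdash B$ --- the premises of $\Pi$-formation --- and post-composition with $\Pi$ yields $\Gamma\vdash\Pi_A B$, automatically stable under substitution because it is a composite. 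Identically, a map $\Gamma\to\alg{P}_\t(\TT)$ is a pair $(A,b)$ with $\Gamma\vdash A$ and a term $\ext{\Gamma}{A}\vdash b:B$, and post-composition with $\lambda$ gives the introduction term $\lambda b$. Since the correspondence is natural in the exponent variable, $\alg{P}_\t(\t)$ is identified with the map sending $(A,b:B)$ to $(A,B)$, so commutativity of the square, $\t\circ\lambda=\Pi\circ\alg{P}_\t(\t)$, says exactly that $\lambda b$ has type $\Pi_A B$.

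Next I would interpret the pullback condition. As pullbacks of presheaves are computed objectwise, the square is a pullback precisely if, for every $\Gamma$, the map $\lambda$ induces a bijection between maps $\Gamma\to\alg{P}_\t(\TT)$ and pairs consisting of a map $\Gamma\to\alg{P}_\t(\T)$ and a term $\Gamma\to\TT$ agreeing over $\T$. Unwinding, this says that for each $\Gamma\vdash A$ and $\Gamma,A\vdash B$ the operation
\[
\lambda : \{\, b \mid \Gamma,A\vdash b:B \,\}\ \too\ \{\, f \mid \Gamma\vdash f:\Pi_A B \,\}
\]
is a bijection, naturally in $\Gamma$. Its inverse is application: using the generic term $\ext{\Gamma}{A}\vdash q_A:A[\pi_A]$ from \eqref{diag:NMpullback}, one applies $f$ reindexed along $\pi_A$ to $q_A$, and the two triangle identities of the bijection are exactly the $\beta$- and $\eta$-rules, while naturality in $\Gamma$ is stability of $\lambda$, application, $\beta$ and $\eta$ under substitution. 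Conversely, given the type-theoretic $\Pi$-rules I would define $\Pi$ and $\lambda$ from formation and introduction via the same UMP, and read the $\beta/\eta$ equations back as the inverse bijection, whence the square is a pullback.

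I expect the main obstacle to be the careful identification of the inverse of $\lambda$ with the application operator, and the verification that the two halves of the bijection correspond exactly to $\beta$ and $\eta$ rather than to something weaker. Concretely, the delicate point is to produce, from a mere element of the fibre product, the term $\Gamma,A\vdash b:B$ by instantiating the universal element $q_A$, and to check that $\lambda$ of this $b$ recovers the original $f$ via $\eta$ while its restriction along a section recovers $b$ via $\beta$; keeping the contexts and the reindexing along $\pi_A$ straight is where the real work lies, whereas the naturality --- hence the Beck-Chevalley stability --- comes for free from the naturality clause of the Lemma.
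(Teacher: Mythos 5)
Your proposal is correct and follows essentially the same route as the paper: you decode maps $\Gamma\to\alg{P}_\t(\T)$ and $\Gamma\to\alg{P}_\t(\TT)$ as the pairs $(A,B)$ and $(A,b)$ via the polynomial UMP and context extension, read the two horizontal maps as $\Pi$-formation and $\Pi$-introduction with commutativity giving the typing of $\lambda b$, and identify the pullback condition (tested against representables) with the existence-and-uniqueness statement whose existence part is $\Pi$-elimination/application and whose two equations are exactly the $\beta$- and $\eta$-computation rules. The only cosmetic difference is that the paper phrases the fibrewise bijection as ``for each $t:\Pi_A B$ there is a unique $b:B$ with $\lambda_A b = t$'' rather than exhibiting $\lambda$ and application as mutually inverse, but these are the same argument.
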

\begin{proof}
Unpacking the definitions, we have $\alg{P}_\t(\T) = \Sigma_{A:\T} \T^A $, etc.,  
so diagram \eqref{diag:prod2} becomes:
\begin{equation*}
\begin{tikzcd}
\Sigma_{A:\T} \TT^A \ar[d,swap, "{\Sigma_{A:\T} \t^A}"]  \ar[r, "{\lambda}"] &  \TT \ar[d,"{\t}"] \\
\Sigma_{A:\T} \T^A    \ar[r, swap,"{\Pi}"] & \T
\end{tikzcd}
\end{equation*}
For $\G \in \CC$, maps $\G\to \Sigma_{A:\T} \T^A $ correspond to pairs $(A,B)$ with  $A:\G\to \T$ and $B : \ext{\G}{A} \to \T$, and thus to $\G\vdash A$ and $\ext{\G}{A} \vdash B$. Similarly, a map $\G\to \Sigma_{A:\T} \TT^A $ corresponds to a pair $(A,b)$ with $\G\vdash A$ and $\ext{\G}{A} \vdash b : B$, the typing of $b$ resulting from composing with the map 
\[
\Sigma_{A:\T} \t^A : \Sigma_{A:\T} \TT^A \to \Sigma_{A:\T} \T^A\,.
\]
\begin{equation*}
\begin{tikzcd}
	& \Sigma_{A:\T} \TT^A \ar[dd]  \ar[rr, "{\lambda}"] &&  \TT \ar[dd,"{\t}"] \\
\G \ar[dr,swap, "{(A,B)}"] \ar[ru, "{(A,b)}"] \ar[urrr, swap, crossing over, "{ \lambda_{A}b }"] \ar[drrr, crossing over, "{ \Pi_{A}B }"] &&& \\
 	& \Sigma_{A:\T}\T^A    \ar[rr, swap,"{\Pi}"] && \T 
\end{tikzcd}
\end{equation*}
The composition across the top is then the term $\G \vdash \lambda_{x:A} b$ , the type of which is determined by composing with $\t$ and comparing with the composition across the bottom, namely $\G \vdash \Pi_{x:A} B$. In this way, the lower horizontal arrow in the diagram models the \emph{$\Pi$-formation rule}:
\[
\frac{\quad\ext{\G}{A}\vdash B\quad}{\G\vdash \Pi_{x:A} B}
\]
and the upper horizontal arrow, along with the commutativity of the diagram, models the \emph{$\Pi$-introduction rule}:
\[
\frac{\ext{\G}{A}\vdash b:B}{\G\vdash \lambda_{x:A} b : \Pi_{x:A} B}
\]
The square \eqref{diag:prod2} is a pullback just if, for every $(A,B) : \G \to \Sigma_{A:\T}\T^A$ and every $t: \G \to \TT$ with $\t \circ t = \Pi_A B$, there is a unique $(A,b) : \G \to \Sigma_{A:\T}\TT^A$ with $b:B$ and $\lambda_A b = t$.  In terms of the interpretation, given $\ext{\G}{A} \vdash B$ and $\G\vdash t: \Pi_{x:A} B$, there is required to be a term $\ext{\G}{A}\vdash t':B$ with $\lambda_{x:A}  t' = t$, and $t'$ is unique with this property.  This is just what is provided by the \emph{$\Pi$-elimination rule}:
\[
\frac{{\ext{\G}{A} \vdash B}\qquad {\G \vdash t : \Pi_{x:A}  B} \qquad {\G \vdash x:A}}{{\ext{\G}{A}\vdash t\, x :B}}
\]
in conjunction with the \emph{$\Pi$-computation rules}:
\begin{align*}
\lambda_{x:A} (t\,x) &= t : \Pi_A B\\
(\lambda_{x:A} b)\,x &= b : B
\end{align*}
\end{proof}

\section{Martin-L\"of algebras}\label{sec:MLalgebras}

Now let $\EE$ be a locally cartesian closed category (lccc) and $\tT$ a map in~$\EE$.  As in the foregoing case where $\EE$ was a category of presheaves $\EE=\widehat{\CC}$, the map $\t$ gives rise to a polynomial endofunctor,
$$\alg{P}_\t = \T_! \circ \t_* \circ \TT^* : \EE\too\EE\,,$$
which we may use to define the following abstraction of the notion of a natural model.
\begin{definition}\label{def:MLalg}
A \emph{Martin-L\"of algebra} in an lccc $\EE$ is a map $\tT$ equipped with structure maps $(*, 1, \sigma, \Sigma, \lambda, \Pi)$ making pullback squares 
\begin{equation}\label{diag:MLalgDef}
\begin{tikzcd}
	1 \ar[r,"*"] \ar[d,swap,"!"] \pbcorner &  \TT \ar[d, "\t"]\\  
	1 \ar[r, swap,"\mathsf{1}"] & \T
 \end{tikzcd}\qquad \qquad 
 \begin{tikzcd}
	\TT_2  \ar[r,"\sigma"] \ar[d,swap,"\t\cdot\t"] \pbcorner &  \TT \ar[d, "\t"]\\  
	\T_2 \ar[r,swap, "\Sigma"] & \T
\end{tikzcd} \qquad \qquad 
	 \begin{tikzcd}
	\alg{P}_\t\TT \ar[r,"\lambda"] \ar[d,swap,"\alg{P}_\t(\t)"] \pbcorner &  \TT \ar[d, "\t"]\\  
	\alg{P}_\t\T \ar[r,swap, "\Pi"] & \T
 \end{tikzcd} 
 \end{equation}
 where the map $\t\cdot\t$ is defined in terms of $\alg{P}_\t$ as in \eqref{diag:naturalmodelPiSigma1} via
 \[
\alg{P}_{\t\cdot\t} = \alg{P}_{\t}\circ \alg{P}_{\t}\,.
 \]
   \end{definition}

In place of representability in the elementary setting we may sometimes require the further condition that $\tT$ be $\emph{tiny}$ in the following sense.

\begin{definition}\label{def:tinymap}
 A map $p : Y \to X$ in a locally cartesian closed category $\EE$ will be said to be \emph{tiny} if it is so as an object in $\EE/_{\!X}$ in the sense that exponentiation by $p$ has a right adjoint $(-)^p \dashv (-)_p$. 
\end{definition}

Note that a map $p : Y \to X$ in an lccc is tiny just if the \emph{pushforward} functor $p_* : \EE/_Y \to \EE/_X$ has a right adjoint,
\[
f_* \dashv f^! : \EE/_X \too \EE/_Y\,.
\]

\begin{proposition}\label{prop:rep_is_tiny}
If $\CC$ has finite limits, a map $p : Y \to X$ in $\widehat{\CC}$ is representable just if it is tiny in the sense of Definition \ref{def:tinymap}, which is the case just if the \emph{pushforward functor} (the right adjoint to pullback) 
\[
p^* \dashv p_* : \widehat{\CC}/_{Y} \longrightarrow \widehat{\CC}/_{X}
\] 
itself has a \emph{right} adjoint:
\[
p_! \dashv p^*\dashv p_* \dashv  p^!
\]
\end{proposition}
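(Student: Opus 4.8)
The plan is to reduce the whole statement to a single cocontinuity criterion and then verify that criterion in two halves. First I would dispose of the equivalence between the two forms of tininess: by the remark following Definition~\ref{def:tinymap}, $p$ is tiny exactly when the pushforward $p_*$ admits a right adjoint $p^!$, which is the asserted string $p_! \dashv p^* \dashv p_* \dashv p^!$ (the first three always exist in $\widehat{\CC}$). Since every slice of a presheaf category is again one, $\widehat{\CC}/_X \simeq \widehat{\int_{\CC} X}$, both slices are locally presentable; as $p_*$ is already a right adjoint (to $p^*$), the adjoint functor theorem gives that $p_*$ has a further right adjoint iff it preserves all small colimits. Under these equivalences $p^*$ becomes restriction along the induced functor $\bar p : \int_{\CC} Y \to \int_{\CC} X$ on categories of elements, so $p_*$ becomes the right Kan extension $\mathrm{Ran}_{\bar p}$. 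Everything thus reduces to: $p$ is representable iff $p_*$ is cocontinuous.

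For the forward direction I would use that the pullback functors $x^* : \widehat{\CC}/_X \to \widehat{\CC}/_{\y C}$, for $x : \y C \to X$ ranging over all elements, are jointly conservative and cocontinuous, together with the Beck--Chevalley isomorphism $x^* p_* \cong q_* \tilde{x}^*$ attached to each defining pullback square, where $q : \y C \times_X Y \to \y C$ is the pullback of $p$ and $\tilde{x}$ its top edge. As $\tilde{x}^*$ is cocontinuous, it suffices that each $q_*$ be cocontinuous. By representability (Definition~\ref{def:representablenattrans}) each $q$ is a map of representables $\y g : \y D \to \y C$; on categories of elements $q^*$ is restriction along $g_! : \CC/D \to \CC/C$ (post-composition with $g$), and here the finite limits of $\CC$ enter, furnishing the right adjoint $g^* : \CC/C \to \CC/D$ (pullback along $g$). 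For an adjunction $g_! \dashv g^*$ one has $\mathrm{Ran}_{g_!} \cong (g^*)^*$, the functor of restriction along $g^*$; hence $q_* \cong (g^*)^*$ is cocontinuous, and joint conservativity of the $x^*$ then forces $p_*$ cocontinuous.

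For the converse I would show each pullback $Q := \y C \times_X Y$ is representable. Using that the pullback functor $x^*$ is logical and that exponentiation by $p$ in the slice is $(-)^{\bar p} = p_* p^*$, a direct computation gives a natural isomorphism
\[
\Hom_{\widehat{\CC}}(Q, Z) \;\cong\; \Gamma_{\y C}\, x^*\, p_*\, !_Y^{\,*}(Z),
\]
where $!_Y^{\,*}$ is pullback along $Y \to 1$ and $\Gamma_{\y C} = \Hom_{\widehat{\CC}/_{\y C}}(1,-)$. Each factor is cocontinuous: $!_Y^{\,*}$ and $x^*$ as pullbacks, $p_*$ by hypothesis, and $\Gamma_{\y C}$ because the terminal object of $\widehat{\CC/C}$ is representable (it is $\mathrm{id}_C$, the terminal object of $\CC/C$), so $\Gamma_{\y C}$ is an evaluation. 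Hence $\Hom_{\widehat{\CC}}(Q,-)$ preserves colimits, i.e.\ $Q$ is atomic, and therefore a retract of a representable $\y D$. Since $\CC$ has equalizers, idempotents split in $\CC$, so this retract is itself representable; thus $Q \cong \y{D'}$ and $p$ is representable.

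The main obstacle is this last step. Cocontinuity of $\Hom_{\widehat{\CC}}(Q,-)$ only yields that $Q$ is a retract of a representable, and it is precisely the passage from ``retract of a representable'' to ``representable'' that requires $\CC$ to have finite limits (equalizers, hence splitting of idempotents); without this the two notions genuinely diverge and tiny maps would merely be ``retract-representable''. Two further points demand care: checking that the adjoint functor theorem applies (local presentability of the slices, equivalently that they are presheaf categories), and marshalling the Beck--Chevalley isomorphism for the right adjoints $p_*$ with the correct variance, so that $x^*$ carries the canonical colimit-comparison map for $p_*$ to that for $q_* \tilde{x}^*$.
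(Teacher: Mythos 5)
Your proof is correct (at the paper's own level of rigor --- both leave the Beck--Chevalley bookkeeping and the tiny-vs-$p^!$ comparison to the reader), but it is organized quite differently from the paper's. The paper's argument pivots on a single observation you never make: Definition \ref{def:representablenattrans} says \emph{exactly} that the induced functor $\textstyle\int\!p : \int\!Y \to \int\!X$ on categories of elements has a right adjoint $(\textstyle\int\!p)^\sharp$. Granting that, the forward direction is one global application of the mechanism you use only locally for $g_! \dashv g^*$: an adjunction between small categories identifies right Kan extension along the left adjoint with precomposition along the right adjoint, so $(\textstyle\int\!p)_* \cong ((\textstyle\int\!p)^\sharp)^*$ is itself a precomposition functor and hence has a further right adjoint; the converse is then asserted (tersely, as ``easily shown'') to follow from finite products and split idempotents in $\CC$. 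Your route instead reduces the whole statement to cocontinuity of $p_*$ via the adjoint functor theorem, proves the forward direction by localizing along the jointly conservative pullbacks $x^*$ with Beck--Chevalley, and proves the converse by the atomicity argument. Each buys something: the paper's proof is shorter and makes visible that the forward implication needs \emph{no} hypothesis on $\CC$ whatsoever --- note that your forward direction invokes finite limits to obtain $g^*$, whereas representability of $p$ already guarantees that pullbacks along $g$ exist in $\CC$, since a natural transformation $\y{D} \to \y{C}$ is representable precisely when such pullbacks exist; your proof, on the other hand, spells out in full the converse that the paper waves off, and the factorization $\Hom_{\widehat{\CC}}(Q,Z) \cong \Gamma_{\y{C}}\, x^*\, p_*\, !_Y^*(Z)$ is a clean way to exhibit atomicity of $Q$. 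Both arguments ultimately agree that the real role of the finite-limit hypothesis is in the converse, where split idempotents (from equalizers) are what collapse ``retract of a representable'' to ``representable.''
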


\begin{proof}
The elementary definition \ref{def:representablenattrans} clearly states that, for the category of elements ${\int\!X} \simeq {\y{}/_X}$, the composition functor 
\[\textstyle
{\int\!p} : {\int\!Y} \to {\int\!X}
\]  
has a right adjoint, say ${\int\!p} \dashv ({\int\!p})^\sharp$. Now recall that $\widehat{\CC}/{Y} \simeq \psh{(\int\!X)}$, so that the precomposition functor $({\int\!p})^* = \psh{({\int\!p})}$ gives rise to a commutative diagram with left and right Kan extensions:
\[\textstyle
({\int\!p})_!\dashv ({\int\!p})^* \dashv ({\int\!p})_*
\]
%
\begin{equation}\begin{tikzcd}
\psh{(\int{\!Y})} \ar[rr,shift right = 1ex,swap, "{\int\!p}_!"]  \ar[rr,shift left = 1ex] &&   \psh{(\int{\!X})} \ar[ll] \\
&& \\
\int{\!Y} \ar[uu, hook, "{\y{}}"]  \ar[rr, swap, "{\int\!p}"]  &&  \int{\!X} \ar[uu, swap,hook, "{\y{}}"]
\end{tikzcd}\end{equation}
But since ${\int\!p} \dashv ({\int\!p})^\sharp$ , there is a further right adjoint $({\int\!p})_* \dashv (({\int\!p})^\sharp)_*$ to precomposition with $({\int\!p})^\sharp$.  Moving back across the equivalence $\psh{(\int\!X)} \simeq \widehat{\CC}/_{X}$ we obtain the claimed further right adjoint: 
\[
p_! \dashv p^* \dashv p_*\dashv p^! :  \widehat{\CC}/_{X} \too \widehat{\CC}/_{Y}
\]
Conversely, a right adjoint $$\textstyle ({\int\!p})_* \dashv R : \psh{(\int{\!X})} \to \psh{(\int{\!Y})}$$ is easily shown to be induced by precomposing with a right adjoint ${\int\!p} \dashv r : {\int\!X} \to {\int\!Y}$ if $\CC$ has all finite products and idempotents split in $\CC$, which is the case if $\CC$ has finite limits.

We leave the construction of the right adjoint $p^!:  \widehat{\CC}/_{X} \too \widehat{\CC}/_{Y}$ from the right adjoint $(-)^p \dashv (-)_p : \widehat{\CC}/_{X} \to \widehat{\CC}/_{X}$ to the reader.
\end{proof}

\begin{remark}\label{remark:clans}
In \cite{awodey:NM} it is shown how a \emph{clan} in the sense of Joyal \cite{Joyal:CandT}, or \emph{category with display maps} in the sense of \cite{Taylor:PFM}, say $(\CC, \DD)$, gives rise to a natural model $\t : \TT\to\T$ in $\widehat\CC$, namely with 
\[
\t\, =\ \coprod_{d\in\DD}\y{d} \,.\]
In particular, the fibrations in any right-proper Quillen model category (which determine a \emph{$\Pi$-tribe} on the fibrant objects, in the language of \cite{Joyal:CandT}), or simply all maps in an lccc, are shown to give rise to a natural model with $1, \Sigma, \Pi$, and thus a Martin-L\"of algebra, which moreover has \emph{identity types}, in the sense of the next section.
\end{remark}

\subsection{Identity Types}\label{sec:Eq and Id}

A natural model $\tT$ in a presheaf category $\widehat{\CC}$ was defined in \cite{awodey:NM} to model both the extensional and intensional identity types of Martin-L\"of type theory in terms of the existence of certain additional structures.  We transfer these definitions to the elementary setting of Martin-L\"of algebras.  Condition (\ref{def:equalitytype}) below is shown in \cite{awodey:NM} to capture the extensional identity types of Martin-L\"of type theory.  The condition given in \emph{op.cit.}\ for the intensional case is replaced in (\ref{def:identitytype}) below by a simplification suggested by R.\ Garner.   

\begin{definition}\label{def:EqualityandIdentityTypes}
Let $\tT$ be a map in an lccc $\EE$. 
\begin{enumerate}
\item\label{def:equalitytype}  $\tT$ is said to  model the (extensional) \emph{equality type former} if there are structure maps $(\refl, \mathsf{Eq})$ making a pullback square:
\begin{equation*}\label{diag:Eqtypes}
\begin{tikzcd}
	\TT \ar[r,"\refl"] \ar[d,swap,"\delta"] \pbcorner &  \TT \ar[d, "\t"]\\  
	\TT \times_\T \TT \ar[r, swap,"\mathsf{Eq}"] & \T
 \end{tikzcd}\qquad \qquad 
 \end{equation*}
\item\label{def:identitytype}   $\tT$ is said to model the (intensional) \emph{identity type former} if there are structure maps $(\mathsf{i}, \mathsf{Id})$ making a commutative square,
\begin{equation}\label{diag:Idtypes}
\begin{tikzcd}
	\TT \ar[r,"\mathsf{i}"] \ar[d,swap,"\delta"] &  \TT \ar[d, "\t"]\\  
	\TT \times_\T \TT \ar[r, swap,"\mathsf{Id}"] & \T
 \end{tikzcd}\qquad \qquad 
 \end{equation}
 together with a weak pullback structure $\mathsf{J}$ for the resulting comparison square, in the sense of \eqref{def:wps} below. 
 \end{enumerate}
   \end{definition}

To describe the map $\mathsf{J}$, let us see how (\ref{def:identitytype}) models identity types.  Under the interpretation already described in Section \ref{sec:NaturalModels} the maps $\Id$ and $\mathsf{i}$  in 
\begin{equation*}
\begin{tikzcd}
	\TT \ar[r,"\mathsf{i}"] \ar[d,swap,"\delta"] &  \TT \ar[d, "\t"]\\  
	\TT \times_\T \TT \ar[r, swap,"\mathsf{Id}"] & \T
 \end{tikzcd}\qquad \qquad 
 \end{equation*}
respectively, directly model the formation and introduction rules.
\begin{align*}
x, y:A &\vdash \Id_A(x, y)\\
x: A &\vdash \mathsf{i}(x) : \Id_A(x, x)
\end{align*}
Next, pull $\t$ back along $\mathsf{Id}$ to get an object $\mathsf{I}$ and a map $\rho  : \TT \to \mathsf{I}$,
\begin{equation*}
\begin{tikzcd}
\TT \ar[rd,swap,"{\delta}"] \ar[r,"{\rho}"] \ar[rr, bend left,"{\mathsf{i}}"] & \mathsf{I} \ar[r]  \ar[d] \pbcorner &  \TT \ar[d, "\t"]\\
& \TT \times_\T \TT  \ar[r, swap,"{\mathsf{Id}}"]  & \T
\end{tikzcd}
\end{equation*}
which commutes with the compositions to $\T$ as indicated below.
\begin{equation*}
\xymatrix{
\TT \ar[rd]_{\t} \ar[r]^{\rho} & I \ar[d]^{q}  \\
& \T 
}
\end{equation*}
The map $\rho  : \TT \to \mathsf{I}$, which can be interpreted as the substitution $(x) \mapsto (x, x, \mathsf{i}x)$, gives rise to a ``restriction'' natural transformation of polynomial endofunctors (\cite{GambinoKoch:2013}),
\[
\rho^* : P_q \to P_\t\,,
\]
evaluation of which at $\t:\TT \to\T$ results in the following commutative naturality square.
\begin{equation}\label{diag:prod3}
\begin{tikzcd}
P_q\TT \ar[d,swap, "{P_q \t}"] \ar[r, "{\rho^{*}_{\TT}}"] & P_\t\TT \ar[d, "{P_\t \t}"] \\
P_q\T \ar[r,swap, "{\rho^{*}_{\T}}"] &  P_\t\T 
\end{tikzcd}
\end{equation}
Note that \eqref{diag:prod3} is a pullback in the extensional case; here we require it to be a \emph{weak} pullback by taking a section of the resulting ``gap map''.   Explicitly, \emph{weak pullback structure} $\mathsf{J}$ is a section of the resulting comparison map.
\begin{equation}\label{def:wps}
\begin{tikzcd}
P_q \TT  \ar[r] & \ar[l, bend right, dotted, "\mathsf{J}" description] P_q\T \times_{P_\t\T} P_\t\TT 
\end{tikzcd}
\end{equation}

To show that this models the standard elimination rule, namely
\[
\frac{x:A\vdash c(x) : C(\rho x)}{x, y:A, z : \Id_A(x, y)\vdash \mathsf{J}_c(x,y,z) : C(x,y,z)}
\]
take any object $\Gamma \in \EE$  and maps $(A, A, \Id_{A} \vdash C) : \Gamma \to P_q\T$ and $(A \vdash c) : \Gamma \to P_\t\TT$ with equal composites to $P_\t\T$, meaning that $A \vdash c : C(\rho x)$.  Composing the resulting map 
\[
(A\vdash c(x):C(\rho x)) : \Gamma \to P_q\T \times_{P_\t\T} P_\t\TT
\]
with $\mathsf{J} : P_q\T \times_{P_\t\T} P_\t\TT \to P_q \TT$ then indeed provides a term $$x:A, y:A, z:\Id_{A}(x,y)  \vdash \mathsf{J}_c(x,y,z) : C(x,y,z)\,.$$

The computation rule
\[
x: A \vdash \mathsf{J}_c(\rho x) = c(x) : C(\rho x)
\]
then says exactly that $\mathsf{J}$ is indeed a section of the comparison map \eqref{def:wps}.

\begin{proposition}[R.\ Garner]\label{prop:Id-typesfromML}  A natural model $\t : \TT \to \T$ satisfies the rules for intensional identity types just if the map $\t : \TT \to \T$ \emph{models} the same in the sense of Definition \ref{def:identitytype}: there are maps $(\mathsf{i}, \Id)$ making the diagram \eqref{diag:Idtypes} commute, together with a weak pullback structure $\mathsf{J}$ for the resulting comparison square \eqref{diag:prod3}.
\end{proposition}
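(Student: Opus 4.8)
The plan is to prove the biconditional by translating each of the identity-type rules into a statement about maps of presheaves via the Yoneda lemma, exactly as the basic CwF structure and the $\Pi$-types were handled in Section~\ref{sec:NaturalModels}. Since $\t$ is representable, every pullback of $\t$ along an element is again representable, so the context extensions appearing below are genuine objects of $\CC$, and a map out of a representable $\y\G$ may be read off as the corresponding judgement in context $\G$. Much of the required dictionary has already been assembled in the discussion preceding the statement; the proof consists in checking these correspondences rigorously and verifying that they are natural in $\G$, so that a single piece of global structure is equivalent to the schematic rule holding uniformly in all contexts. Both directions of ``just if'' then follow at once, since the translation is a bijection between the data $(\i,\Id,\mathsf{J})$ and witnesses that the rules hold.

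First I would treat formation and introduction, which correspond to the data $(\i,\Id)$ and the commutativity of \eqref{diag:Idtypes}. A map $\G \to \TT \times_\T \TT$ is, by the universal property of the pullback, a pair of terms $\G \vdash a : A$ and $\G \vdash b : A$ of a common type, and composing with $\Id$ assigns to each such pair a type; this is precisely the formation rule producing $\Id_A(a,b)$. The diagonal $\delta$ is the substitution $(x)\mapsto(x,x)$ and $\i : \TT \to \TT$ produces a term $\i(x)$ for each $x : A$, so the equation $\t \circ \i = \Id \circ \delta$ says exactly that $\i(x) : \Id_A(x,x)$, namely the introduction rule. Naturality in $\G$ is automatic, since all the maps involved are fixed maps of presheaves and substitution is interpreted by composition.

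The substantive part is the equivalence between elimination and computation and the weak pullback structure $\mathsf{J}$. Here I would use the polynomial universal property \eqref{diag:polyUMP} to decode the four corners of \eqref{diag:prod3} and the comparison map. Because $q : \mathsf{I} \to \T$ is a composite of context extensions (the double extension by $A$ followed by the extension by $\Id$), it is itself representable, so a map $\G \to P_q\T$ corresponds to a pair $(A,C)$ with $A$ a type and $C$ a type in the extended context $\G, x,y : A, z : \Id_A(x,y)$, i.e.\ a motive. Unwinding the fibre product then shows that a map $\G \to P_q\T \times_{P_\t\T} P_\t\TT$ is a motive $C$ together with a term $\G, x : A \vdash c : C(\rho x)$, the base case, the pullback condition forcing the type of the term to be the restriction $C[\rho]$. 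Computing the components of $\rho^*$ shows the comparison map $P_q\TT \to P_q\T \times_{P_\t\T} P_\t\TT$ sends a term defined on the whole identity context to the pair consisting of its type and its restriction along $\rho$. A section $\mathsf{J}$ is therefore precisely an operation producing, from a motive and a base case, a term on the whole identity context of the prescribed type — the elimination rule — while the section condition (that the comparison map composed with $\mathsf{J}$ is the identity on \eqref{def:wps}) records exactly that the restriction of $\mathsf{J}_c$ to the diagonal equals $c$, namely the computation rule $\mathsf{J}_c(\rho x) = c(x)$.

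The hard part will be the polynomial bookkeeping in this last step: correctly identifying the components $\rho^*_\T$ and $\rho^*_\TT$ of the restriction natural transformation of polynomial endofunctors in the sense of \cite{GambinoKoch:2013} with the substitutions $C \mapsto C[\rho]$ and $c \mapsto c[\rho]$, and verifying that the gap map of \eqref{diag:prod3} decodes as claimed under the correspondence \eqref{diag:polyUMP}, naturally in $\G$. Once this identification is in place, the remaining verifications are the now-routine type-theoretic readings, and the biconditional is immediate since the passage between $(\i,\Id,\mathsf{J})$ and the rules is a bijection.
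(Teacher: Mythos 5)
Your proposal is correct and takes essentially the same route as the paper, whose own argument is exactly the discussion preceding the proposition: formation and introduction are read off from the commuting square \eqref{diag:Idtypes}, while elimination and computation are decoded, via the polynomial universal property \eqref{diag:polyUMP}, as the section $\mathsf{J}$ of the gap map of \eqref{diag:prod3}, the section condition encoding both the typing $\mathsf{J}_c : C$ and the equation $\mathsf{J}_c(\rho x)=c(x)$. The only difference is one of emphasis: you explicitly flag the naturality-in-$\G$ bookkeeping (and the representability of $q$) needed to pass between the global maps and the schematic rules, which the paper leaves implicit.
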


The proposition clearly generalizes to arbitrary ML-algebras.

\section{Examples of ML-algebras}\label{sec:ExamplesML-Algebras}

\begin{example}\label{example:soc}
In an elementary topos $\EE$, the subobject classifier $\top : 1\to \Omega$ is an ML-algebra with the following structure maps.
\begin{equation*}\label{diag:MLalgO}
\begin{tikzcd}
	1 \ar[r,"!"] \ar[d,swap,"!"] \pbcorner & 1 \ar[d, "\top"]\\  
	1 \ar[r, swap,"\top"] & \Omega
 \end{tikzcd}\qquad\quad
 \begin{tikzcd}
	Q  \ar[r,"!"] \ar[d,swap,"{\top\cdot\top}"] \pbcorner &  1 \ar[d, "\top"]\\  
	\alg{P}_\top\Omega \ar[r,swap, "\Sigma"] & \Omega
\end{tikzcd} \qquad\quad
	 \begin{tikzcd}
	\alg{P}_{\top}1 \ar[d,swap,"\alg{P}_{\top}(\top)"] \ar[r,"!"] \pbcorner &  1 \ar[d, "\top"]\\  
	\alg{P}_{\top}\Omega \ar[r,swap, "\Pi"] & \Omega
 \end{tikzcd} 
 \end{equation*}
To specify the maps $\Sigma$ and $\Pi$, note that the polynomial functor $\alg{P}_\top$ is:
 \[
 \alg{P}_\top(X) = \Sigma_{p:\Omega}X^p = \widetilde{X}
 \]
where $\eta_X : X \mono \widetilde{X}$ is the topos-theoretic \emph{partial map classifier} \cite[Proposition A.2.4.7]{Johnstone:Elephant}. 
It follows easily from the specification of $\top\cdot\top : Q \to \alg{P}_\top\Omega$ in \eqref{diag:polyUMP2} that $\top\cdot\top$ is monic, and so there is indeed a (unique) classifying map $\Sigma : \alg{P}_{\top}\Omega \to \Omega$.   Similarly, since polynomial functors preserve monos, the map $\alg{P}_{\top}(\top) : \alg{P}_{\top}1 \to \alg{P}_{\top}\Omega$ (which is in fact the partial map classifier $\eta_\Omega : \Omega \mono \widetilde{\Omega}$) is also monic, and therefore also has a (unique) classifying map  $\Pi :\alg{P}_{\top}\Omega \to \Omega$.  See \cite[\S 6]{AGH} for details. 
\end{example}
\begin{example}\label{example:natmod}
As explained in Section \ref{sec:NaturalModels}, a natural model of type theory $\t: \TT\to \T$ over any index category $\CC$ is an ML-algebra in presheaves $\widehat\CC$ if it models the type formers $1, \Sigma, \Pi$.     For example, the groupoid model of \cite{Hofmann-Streicher} can be described as an ML-algebra in the category $\widehat{\mathsf{Gpd}}$ of presheaves on the category $\mathsf{Gpd}$ of groupoids. As is done in \emph{ibd.}, we can take $\T : \op{\mathsf{Gpd}} \to \Set$ to be the representable functor of the groupoid $\mathsf{gpd}$ of \emph{small} groupoids, where \emph{smallness} is defined as having cardinality strictly less than $\kappa$ for a suitably large cardinal number $\kappa$.  We may then furthermore take $\TT$ to be represented by $\dot{\mathsf{gpd}}$, the groupoid of small pointed groupoids, with $\t: \TT\to \T$ represented by the forgetful functor $\dot{\mathsf{gpd}} \to \mathsf{gpd}$, making the entire ML-algebra $\t: \TT\to \T$ representable.  Because the groupoid model has intensional identity types \cite{Hofmann-Streicher}, the corresponding natural model  $\t: \TT\to \T$ models them as well, as shown in \cite{awodey:NM}, and so the ML-algebra also has $\Id{}$-types in the sense of Definition \ref{def:EqualityandIdentityTypes}.

Observe that in this case, since the entire algebra $\t: \TT\to \T$ is representable, we don't really need the lccc $\widehat{\mathsf{Gpd}}$ for the notion of an ML-algebra to make sense; only that certain display maps are exponentiable in the index category $\mathsf{Gpd}$.  We shall not spell out the exact condition required here (roughly, being a $\Pi$-tribe in the sense of \cite{Joyal:CandT}), but only mention that the same is true for the syntactic model of Martin-L\"of type theory over the category $\mathsf{Ctx}$ of contexts and substitutions, which is \emph{a proiori} an ML-algebra in the category of presheaves $\widehat{\mathsf{Ctx}}$, but also has one in the non-lccc $\mathsf{Ctx}$ itself, if the type theory is assumed to have a universe $\mathsf{U}$ that is closed under $1, \Sigma, \Pi$.  Of course, we are using the fact that the Yoneda embedding preserves finite limits and exponentials, and therefore also ML-algebras
\end{example}
\begin{example}\label{example:strictification}
Generalizing the foregoing example, the strictification of an lccc $\EE$ (or just a category $\EE$ with a wfs) gives rise to a natural model with extensional equality (or intensional identity) types in $\widehat{\EE}$, as shown in \cite{awodey:NM}, and therefore to an ML-algebra over $\EE$ with the same.  Note that the map $\t: \TT\to \T$ is therefore tiny by Proposition \ref{prop:rep_is_tiny}, and therefore so is the entire ML-algebra structure, since representable maps are stable under pullback.  In the lccc case, the diagonal $\TT \to \TT\times_{\T}\TT$ interpreting the extensional equality types is therefore also tiny.
\end{example}
\begin{example}\label{example:HSuniverse}
In \cite{awodey:universes} the Hofmann-Streicher universe construction \cite{HS:1997} is analyzed using a realization-nerve adjunction between presheaves $\widehat\CC$ and $\Cat$, for any small index category $\CC$.  Briefly, the category of elements functor $\int_{\CC} : \widehat\CC \to \Cat$ is shown to have a right adjoint \emph{nerve functor} $\nu_\CC : \Cat \to \widehat\CC$, taking a small category $\A$ to the homset of functors $$\nu_\CC(\A)(c) = \Cat(\CC/c,\, \A)\,,$$ where $\CC/- : \CC \to \Cat$ is the \emph{covariant} slice category functor given by post-composition.
\begin{equation}\label{eq:nerve}\textstyle
\begin{tikzcd}
	 \widehat\CC \ar[rr, swap,"\int_\CC"] &&  \ar[ll, swap,bend right=20, "{ \nu_\CC}"] \Cat& \quad \int_\CC \dashv \nu_\CC\\  
	 \\
	\CC \ar[uu, hook, "\y"] \ar[rruu, swap,"{\CC/_{-}}"] &&&
 \end{tikzcd}
 \end{equation}
A Hofmann-Streicher universe $\VV \to \V$ in  $\widehat\CC$ is then shown to be the nerve of the classifier for discrete fibrations $\op\SSet\to\op\Set$ (suitably bounded in size by a cardinal $\kappa$, see \cite[\S 4]{awodey:universes}).    That is, we let
\begin{align}\label{eq:universedef}\textstyle
\VV_\kappa\, &=\, \nu\op{\dot{\Set}_\kappa}\\  
\V_\kappa\, &=\, \nu\op{\Set}_\kappa.\notag
 \end{align}
 
Now, the forgetful functor $U : \op{\dot{\Set}_\kappa}\to \op{\Set}_\kappa$ is a discrete fibration in $\Cat$, and therefore exponentiable. Moreover, one can show that if $\kappa$ is inaccessible, then $U : \op{\dot{\Set}_\kappa}\to \op{\Set}_\kappa$ will be an ML-algebra in $\Cat$, in the sense that there are structure maps $(*, 1, \sigma, \Sigma, \lambda, \Pi)$ making pullback squares 
as in Definition \ref{def:MLalg}, where the polynomial functor $\alg{P}_U : \Cat \to \Cat$ is used (even though $\Cat$ is not an lccc).

Finally, it can be shown that the right adjoint nerve functor $\nu_\CC : \Cat \to \widehat\CC$ commutes with polynomials, and therefore preserves ML-algebras.  It follows that any Hofmann-Streicher universe $\VV_\kappa \to \V_\kappa$  (for suitable $\kappa$) is an ML-algebra (with extensional equality) in $\widehat\CC$.  Indeed, the example of the subobject classifier $1 \to \Omega$ in a (presheaf) topos was an instance of this, since it is the nerve of the classifier $\mathbbm{1} \to \mathbbm{2}$ for sieves, which is itself an ML-algebra in $\Cat$, namely 
$$\mathbbm{1} = \SSet^{\mathsf{op}}_2 \too  \Set^{\mathsf{op}}_2 = \mathbbm{2}\,,$$  
so that in $\widehat\CC$ we have
\[
1 = \nu \mathbbm{1} = \VV_{2} \too  \V_{2} = \nu \mathbbm{2} = \Omega  \,.
\]
\end{example}

\section{A polynomial monad}\label{sec:Monad}
%
%
%

Let $\EE$ be an lccc, and recall from \cite[Theorem 2.17]{GambinoKoch:2013} the biequivalence 
\begin{equation}\label{eq:basicpolyequiv1}
\mathsf{Poly}^1_{\EE}\ \simeq\ \mathsf{PolyFun}^1_{\EE}
\end{equation}
between the 1-object bicategory $\mathsf{Poly}^1_{\EE}$ consisting of arrows $f:B \to A$ as 1-cells and cartesian squares
\begin{equation}\label{diag:cartsquare}
\begin{tikzcd}
B \ar[d,swap, "f"] \ar[r, "{h'}"] \pbcorner & D \ar[d, "g"] \\
A \ar[r,swap, "{h}"] &  C 
\end{tikzcd}
\end{equation}
as 2-cells, and the 1-object 2-category $\mathsf{PolyFun}^1_{\EE}$ consisting of polynomial endofunctors $P_f: \EE\to \EE$ and cartesian, strong natural transformations $\theta : P_f \Rightarrow P_g$ as 2-cells.  This is the 1-object special case of a more general biequivalence relating polynomials of the form $I \leftarrow B \to A \to J$ as 1-cells, between the objects of $\EE$ as 0-cells, and polynomial functors $\EE/_I \to \EE/_J$. (It also specializes at the level of 2-cells, where in the 1-object case we have cartesian strong natural transformations rather than all strong natural transformations).  

Simplifying the description in the 1-object case, we have an equivalence of 1-categories between the monoidal category $\EE^{\downarrow}_{\mathsf{cart}}$ consisting of cartesian squares in the arrow category and the strict monoidal subcategory of $\Cat(\EE, \EE)$ consisting of polynomial endofunctors and cartesian natural transformations, so that \eqref{eq:basicpolyequiv1} fits into the following diagram of 1-categories:
\begin{equation}\label{diagram:polyandpolyfun}
\begin{tikzcd}
& \mathsf{Poly}^1_{\EE} \ar[d,equals] \ar[r,"{\simeq}"] & \mathsf{PolyFun}^1_{\EE}  \ar[d,equals] &\\
\EE^{\downarrow}  &\EE^{\downarrow}_{\mathsf{cart}} \ar[l, hook] \ar[r,"{\simeq}"] & \mathsf{PolyFun}(\EE, \EE)_{\mathsf{cart}}  \ar[r, hook] & \mathsf{Fun}(\EE, \EE)
\end{tikzcd}
\end{equation}
Since cartesian natural transformations between polynomial endofunctors are always strong, we may omit mention of the strength in the polynomial case.

Under the equivalence in \eqref{diagram:polyandpolyfun}, a Martin-L\"of algebra in $\EE$ as in Definition~\ref{def:MLalg}, to wit,
\begin{equation}\label{diag:MLalg2}
\begin{tikzcd}
	1 \ar[r,"*"] \ar[d,swap,"!"] \pbcorner &  \TT \ar[d, "\t"]\\  
	1 \ar[r, swap,"\mathsf{1}"] & \T
 \end{tikzcd}\qquad \qquad 
 \begin{tikzcd}
	\TT_2  \ar[r,"\sigma"] \ar[d,swap,"\t\cdot\t"] \pbcorner &  \TT \ar[d, "\t"]\\  
	\T_2 \ar[r,swap, "\Sigma"] & \T
\end{tikzcd} \qquad \qquad 
	 \begin{tikzcd}
	\alg{P}_\t\TT \ar[r,"\lambda"] \ar[d,swap,"\alg{P}_\t(\t)"] \pbcorner &  \TT \ar[d, "\t"]\\  
	\alg{P}_\t\T \ar[r,swap, "\Pi"] & \T
 \end{tikzcd} 
 \end{equation}
gives rise to the following data in $\mathsf{PolyFun}(\EE, \EE)$:
\begin{enumerate}
\item a cartesian natural transformation $\tau :1_\EE \Rightarrow P_\t$\,,
\item a cartesian natural transformation $\sigma :P_\t\circ P_\t \Rightarrow P_\t$\,,
\item a cartesian natural transformation $\pi:P_\t\t \Rightarrow \t$\,.
\end{enumerate}
Items (1) and (2) are of course familiar as the underlying structure of a monad; item (3) is a somewhat less familiar special case of an algebra structure map, with $P_\t$ lifted from $\EE$ to $\EE^{\downarrow}_{\mathsf{cart}}$ using the fact that polynomial functors preserve pullbacks.  The obvious question to ask now is: are the respective \emph{equations} for a monad and an algebra satisfied by the natural transformations $\tau, \sigma, \pi$\,?  The answer in general is \emph{no}, but as shown in \cite{Newstead:thesis, NA:2018}, there are reasons to investigate further.
 
Namely, following \emph{op.cit.}, the monad laws for $\tau$ and $\sigma$, when expressed as equations between natural transformations (in the left-hand column below) correspond to the following equations between objects in $\EE$ (in the right-hand column), as the components of the corresponding natural transformations.
\begin{align*}\textstyle
 \sigma\circ P_\t\, \sigma\, &=\, \sigma\circ\sigma_{P_\t}	&&&	\sum\limits_{a:A}\sum\limits_{b:B(a)}C(a,b)\, &=\, \sum\limits_{(a,b):\sum\limits_{a:A}B(a)}C(a,b)\  \\ 
  \sigma\circ P_\t\,\tau\, &=\, 1 		&&& 	\sum\limits_{a:A}1\, &=\, A   \tag{\theequation}\label{typeisos}\\ 
  \sigma\circ \tau_{P_\t}\, &=\, 1		&&& 	\sum\limits_{x:1}A\, &=\, A  
 \end{align*}
And the algebra laws for $\pi$ correspond to the following equations between objects in $\EE$.
\begin{align*}
 \pi\circ P_\t\,\pi\, &=\,  \pi\circ\sigma 	&&&	\prod\limits_{a:A}\prod\limits_{b:B(a)}C(a,b)\, 
 									&=\, \prod\limits_{(a,b):\sum_{a:A}B(a)}C(a,b)  \\ 
 \pi\circ \tau_{P_\t}\, &=\, 1 	&&& 	\prod\limits_{x:1}A\, &=\, A   \tag{\theequation}
\end{align*}

While the equations of objects in the right-hand column will not in general hold strictly, they do correspond to familiar \emph{type isomorphisms} that are provable from the usual rules of extensional dependent type theory with $1, \Sigma, \Pi, \mathsf{Eq}$ (respectively, \emph{type equivalences} in the intensional case with $\mathsf{Id}$ in place of $\mathsf{Eq}$). 

In order to capture this apparently missing higher dimension of structure, in \cite{Newstead:thesis, NA:2018} the monoidal 1-category $\mathsf{PolyFun}(\EE, \EE)_{\mathsf{cart}}$ of endofunctors and cartesian natural transformations was enriched to a bicategory by defining certain 2-cells between the natural transformations, in such a way that $(\tau, \sigma)$ becomes a \emph{pseudo}-monad and $\pi$ a \emph{pseudo}-algebra.  The result was not entirely satisfactory, however, as the top dimension of structure was essentially codiscrete, so that the monad and algebra laws were not distinguished among all 2-cells between the particular parallel 1-cells in question.

Here we shall take a different approach, restricting the 0-cells $f, g$ that can occur in \eqref{diag:cartsquare} (and the corresponding polynomial endofunctors $\alg{P}_f, \alg{P}_g$) to maps that are  \emph{classified} by an assumed ML-algebra $\t:\TT \to \T$ with $\Id{}$-types, as defined in Section \ref{sec:Eq and Id}. We then use the latter to determine a notion of 2-cell between parallel cartesian squares (and therefore between parallel natural transformations), thus promoting the 1-categories of $\EE^{\downarrow}_{\mathsf{cart}}$ and $\mathsf{PolyFun}(\EE, \EE)_{\mathsf{cart}}$ (over $\T$) to bicategories, in such a way that the ``type equivalences'' \eqref{typeisos} do arise from corresponding pseudomonad and pseudoalgebra structures in a non-trivial way.

\subsection{Classification}\label{sec:classify}

Let $\TT \to \T$ be an ML-algebra in an lccc $\EE$, and say that a map $a : A\to X$ in $\EE$ is \emph{classified} by $\TT \to \T$ if there exist maps $\alpha : X \to \T$ and $\dot{\alpha} : A \to \TT$ fitting into a pullback square as follows.
\begin{equation}\label{eq:classifiedfamilies}
\begin{tikzcd}
	 A \ar[d, swap, "a"] \pbcorner \ar[r, "\dot{\alpha}"] & \TT \ar[d, "\t"] \\  
	X \ar[r, swap, "\alpha"] & \T 
 \end{tikzcd}
 \end{equation}
 A morphism $f : a \to b$ of classified maps is a pullback square as on the left below making the diagram commute.
 \begin{equation}\label{eq:classifiedfamilies2}
\begin{tikzcd}
	A \ar[d,swap, "a"] \ar[r,swap, "\dot{f}"]  \ar[rr, bend left, "\dot{\alpha}"]  \pbcorner & B \ar[d,swap, "b"]   \ar[r,swap,  "\dot{\beta}"] \pbcorner & \TT \ar[d, "\t"]  \\  
	X \ar[r, "f"] \ar[rr,swap, bend right, "\alpha"] & Y  \ar[r, "{\beta}"] &  \T
 \end{tikzcd}
 \end{equation}
 Thus the category of classified maps is the slice category $\EE^{\downarrow}_{\mathsf{cart}}/_{\t}$  of the cartesian arrow category over $\t$\,.

The codomain functor
\[
\mathsf{cod} : \EE^{\downarrow}_{\mathsf{cart}}/_{\t} \too \EE/_\T
\]
is an equivalence of categories, with a right adjoint $\t^{\sharp} : \EE/_\T \to \EE^{\downarrow}_{\mathsf{cart}}/_{\t}$ choosing, for each object $\alpha : X \to \T$ in $\EE/_\T$\,, a pullback span $$X \stackrel{\ a}{\leftarrow} A \stackrel{\dot{\alpha}\ }{\to} \TT$$ as in \eqref{eq:classifiedfamilies}, which determines the strictly functorial action on arrows.  As an adjoint, the functor $\t^{\sharp}$ is of course uniquely determined up to natural isomorphism.  Summarizing, we have an adjoint equivalence of 1-categories:
 \begin{equation}\label{eq:slice adjunction}
\begin{tikzcd}
 {\EE^{\downarrow}_{\mathsf{cart}}/_{\t}}  \ar[rr, bend right, swap, "{\mathsf{cod} }"] \ar[rr, phantom, description, "{\simeq}"] && \ar[ll, bend right, swap, "{\t^{\sharp}}"] \EE/_\T & \quad \mathsf{cod} \dashv \t^{\sharp}
 \end{tikzcd}
 \end{equation}

Note that we also have a natural model of type theory on the index category $\EE$, given by (the Yoneda embedding applied to) $\t: \TT \to \T$, with the category of elements of the functor $\y\T$ of types in context being $\int_{\EE}\!\T \simeq \EE/_\T$ and the category of elements functor of the arrow $\t: \TT \to \T$ being the post-composition functor $\t_{!} : \EE/_{\TT} \to \EE/_{\T}$.
 \begin{equation}\label{eq:sliceadjunctionelements}
\begin{tikzcd}
\int_{\EE}\!\TT \ar[d,swap, "{\int_{\EE}\!\t}"] \ar[r, equals] & \EE/_{\TT} \ar[d, "{\t_{!}}"] \ar[rd] & \\
 \int_{\EE}\!\T \ar[r,equals] &  \EE/_\T \ar[r] & \EE
 \end{tikzcd}
 \end{equation}
 In terms of this natural model, the functor $\t^{\sharp} : \EE/_\T \to \EE^{\downarrow}_{\mathsf{cart}}/_{\t}$ taking $\alpha : X \to \T$ to $\alpha^*\t = a: A \to X$ (equipped with a cartesian square $(\alpha, \dot{\alpha}) : \alpha^*\t \to \t$ as in \eqref{eq:classifiedfamilies}), is the usual ``comprehension'' or context extension operation determining the right adjoint to $\int_{\EE}\!\t$.
 
Finally, let $\EE_\t \hook  \EE^{\downarrow}_{\mathsf{cart}}$ be the essential image of the forgetful functor 
\[
U_\t :  \EE^{\downarrow}_{\mathsf{cart}}/_{\t} \twoheadrightarrow \EE_\t \hook \EE^{\downarrow}_{\mathsf{cart}}
\]
that takes a classified map \eqref{eq:classifiedfamilies} to the map $a : A \to X$, forgetting its classifying square.  
The category $\EE_\t$ thus consists of those arrows $a:A \to X$ that are ``classifiable'' by $\t$, with cartesian squares between them as morphisms. As a subfibration of the codomain fibration 
\[
\EE_\t \hook \EE^\downarrow \stackrel{\mathsf{cod}}{\epi} \EE
\]
 the fibration $\EE_\t \epi \EE$ is not discrete, unlike $\EE^{\downarrow}_{\mathsf{cart}}/_{\t} \twoheadrightarrow \EE/_\T$, and 
no longer comes with a splitting, as was given by the choice $\u^\sharp$ of classified maps $a : A\to X$ for each $\alpha : X \to \T$.  The classifiable maps endow $\EE$ with the structure of a category with display maps in the sense of \cite{Taylor:PFM}.  

\begin{remark}\label{remark:classifyingmaps}
In virtue of the ML-algebra structure on $\TT\to\T$, the classifiable maps $A\to X$ are still closed under the type formers $1, \Sigma, \Pi$, when  regarding objects $X \in \EE$ as contexts and classifiable maps $A\to X$ as families of types in context~$X$, as usual for (``incoherent'') models of dependent type theory in clans, categories with display maps, or similar (see Remark \ref{remark:clans}).  However, in what follows, we prefer to use the split, discrete fibration $\EE/_\T \to \EE$ as a model of type theory, because it comes with the strictly coherent natural model structure of \eqref{eq:sliceadjunctionelements}.  We can then regard the objects $\alpha : X \to \T$ as \emph{type families in context $X$}, and refer to them interchangeably also in terms of their (canonically chosen) display maps $\alpha^*\t := a : A \to X$.  See \cite[\S 1--3]{AGH} for a systematic development of this solution to the familiar ``coherence problem'' \cite{Hofmann:1994} for modeling dependent type theory, which derives ultimately from Voevodsky \cite{KL:VV}.  
\end{remark}

\subsection{Equivalence of types}\label{sec:equivalence}

Now suppose that the ML-algebra $\TT \to \T$ in $\EE$ also has the structure required for (extensional $\mathsf{Eq}$-types or, more generally, intensional) $\Id{}$-types, in the sense of Definition \ref{def:EqualityandIdentityTypes}.  By Proposition \ref{prop:Id-typesfromML}, this endows all types $\alpha : X \to \T$ with identity types, which we write interchangeably as $\Id_{A}(a, b)$ and $(a =_A b)$.   We can then use these to define an \emph{equivalence} of types $A\simeq B$ (over the base $X$, which we suppress in what follows) as in \cite[\S 4.3]{HoTTbook}, namely:
\begin{definition}
 A map $e : A\to B$ is a (type) \emph{equivalence} if it has both right and left inverses, in the sense that there are maps $f, g : B\tto A$ such that 
 \[
f\circ e =_{A^A} 1_A \qquad\text{and}\qquad  e\circ g =_{B^B} 1_B  \,.
 \]
 Formally, given $e : A\to B$ we define 
 \[
 \mathsf{isEquiv}(e)\ :\equiv\ \Sigma_{f, g: B\to A}\,  \Pi_{x:A}\Id_{A}(fe(x) , x) \times \Pi_{y:B}\Id_{B}(eg(y), y) \,.
 \]
Then for $A, B: \T$, we define the notation $A \simeq B$ by
 \[
A \simeq B\, =\, \mathsf{Equiv}(A,B)\ :\equiv\  \Sigma_{e : A\to B}\, \mathsf{isEquiv}(e)\,.
 \]
\end{definition}
We can construct (the interpretation of) the universal equivalence of type families $A, B : \T \vdash A \simeq B$ as a map $\mathsf{Equiv} \to \T \times \T$ as follows (cf.\ \cite{KL:VV}).

First, pull $\TT \to \T$ back along the two different projections $\T \times \T \tto \T$ to obtain two different objects $\TT_1, \TT_2$ over $\T \times \T$, and then take their exponential $[\TT_1, \TT_2]$ in the slice category over ${\T \times \T}$, which interprets the type family $A, B : \T \vdash A \to B$.
\begin{equation*}\label{diag:univalence1}
\begin{tikzcd} 
	\TT \ar[d] & \TT_1 \ar[l] \ar[rd] \pbcornerright &\ar[d] [\TT_1, \TT_2] & \TT_2 \ar[ld] \ar[r] \pbcorner & \TT \ar[d] \\  
	\T && \ar[ll]  \T \times \T \ar[rr] && \T
	 \end{tikzcd}
 \end{equation*}
 Indeed, after pulling $\TT_1, \TT_2$ back to $[\TT_1, \TT_2]$ there is a (universal) arrow $\varepsilon : \TT'_1 \to \TT'_2$ over $[\TT_1, \TT_2]$, as indicated below.
 \begin{equation}\label{diag:univalence11}
\begin{tikzcd} 
& \TT'_1  \ar[d] \arrow[dr,phantom,"\lrcorner" very near start, shift right=1.5ex] \ar[rd] \ar[rr,"\varepsilon"] 
	&& \TT'_2 \ar[ld]  \arrow[dl,phantom,"\llcorner" very near start, shift left=1.5ex] \ar[d] & \\  
	\TT \ar[d] & \TT_1 \ar[l] \ar[rd] \pbcornerright &\ar[d] [\TT_1, \TT_2]& \TT_2 \ar[ld] \ar[r] \pbcorner & \TT \ar[d] \\  
	\T && \ar[ll]  \T \times \T \ar[rr] && \T
	 \end{tikzcd}
 \end{equation}
As $\mathsf{Equiv} \to \T \times \T$ we then take the composite of the canonical projection $\mathsf{isEquiv}(\varepsilon) \to [\TT_1, \TT_2]$ with the map $[\TT_1, \TT_2] \to  \T \times \T$.
\begin{equation*}\label{diag:univalence2}
\begin{tikzcd} 
{\Sigma_{e : A\to B}\mathsf{isEquiv}(e)}  \ar[d, equals] \ar[r] & {[\TT_1, \TT_2]}  \ar[d]\\
\mathsf{Equiv} \ar[r] &  \T \times \T
 \end{tikzcd}
 \end{equation*}
Given $\alpha, \beta : X \to \T$ classifying types $A\to X$ and $B \to X$, factorizations of the map $(\alpha, \beta) : X \to \T \times \T $ through $\mathsf{Equiv} \to \T \times \T$ can then be seen using \eqref{diag:univalence11}  to correspond to equivalences $A\simeq B$ over $X$, as indicated in the following, in which the indicated map $\mathsf{E}_1\simeq\mathsf{E}_2$ is the pullback of  $\varepsilon : \TT'_1\to \TT'_2$ along $\mathsf{Equiv}\to {[\TT_1, \TT_2]}$.
\begin{equation}\label{diag:univalence3}
\begin{tikzcd} 
	&& \mathsf{E}_1 \ar[rd]  \ar[r,"{\simeq}"]  & \mathsf{E}_2 \ar[d] \\  
A \ar[rd] \ar[r, "{\simeq}"] \ar[rru, dotted] & B \ar[d] \ar[rru, crossing over, dotted] && \mathsf{Equiv} \ar[d] \\  
& X \ar[rru, dotted] \ar[rr, swap, "{(\alpha, \beta)}"] && \T \times \T
	 \end{tikzcd}
 \end{equation}
The correspondence is natural in $X$, because it is mediated by pulling back a universal instance.  

The structure $\mathsf{Equiv} \tto \T $ is an internal graph in $\EE$ that comes with a reflexivity $\T \to \mathsf{Equiv}$, symmetry $\mathsf{Equiv} \to \mathsf{Equiv}$ and transitivity $\mathsf{Equiv}\times_\T \mathsf{Equiv} \to \mathsf{Equiv}$ over $\T$ in virtue of the identity types and the resulting nature of type equivalences; it could be shown to be an ``internal weak $\infty$-groupoid'' in a suitable sense that we will not pause to specify (but see \cite{Lumsdaine:groupoids}).  Its purpose here is merely to endow the category $\EE/_\T$ of classified types $\alpha : X \to \T$ with higher-dimensional structure by ``homming in'', as explained in the introduction. 

The following lemma shows that, for a map $e : A \to B$ (over a base $X$), the property of being an equivalence of types is independent of the choice of classifying maps of $A$ and $B$.
\begin{lemma}\label{lemma:equivinvariance}
Suppose given $A\to X$ classified by $\alpha: X \to \T$ and $B \to X$ classified by $\beta: X \to \T$, and a map $e : A \to B$ over $X$ that is an equivalence in virtue of a lift $\tilde{e} : X \to  \mathsf{Equiv}$ of $(\alpha, \beta) : X \to \T\times \T$.   Then upon choosing different classifying maps $\alpha', \beta' : X \to \T$, the map $e : A \to B$ over $X$ is still an equivalence, now in virtue of a lift ${\tilde{e}}' : X \to \mathsf{Equiv}$ of $(\alpha', \beta') : X \to \T\times \T$.
\end{lemma}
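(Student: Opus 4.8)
The plan is to reduce the statement to two standard facts about type equivalences: that a strict isomorphism of underlying objects is a type equivalence, witnessed by reflexivity, and that type equivalences compose, via the transitivity structure on $\mathsf{Equiv}\to\T\times\T$. The essential observation is that replacing the classifying map $\alpha$ of $A\to X$ by $\alpha'$ does not alter the underlying object $A$; it only re-presents its identity type, and the two presentations are bridged by the identity map $1_A$, regarded as a map between differently classified types.

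First I would note that, since $A\to X$ is classified by both $\alpha$ and $\alpha'$, the identity $1_A : A \to A$ over $X$ may be viewed as a map from $A$-classified-by-$\alpha'$ to $A$-classified-by-$\alpha$, and that this map is a type equivalence. Indeed, taking both putative inverses $f,g$ in the definition of $\mathsf{isEquiv}$ to be $1_A$, the equations $f\circ 1_A = 1_A$ and $1_A\circ g = 1_A$ hold strictly, so the reflexivity term $\mathsf{i}$ --- available on $A$ through each of its two classifications by Proposition~\ref{prop:Id-typesfromML} --- inhabits the required identity types $\Id^{\alpha'}_A(x,x)$ and $\Id^{\alpha}_A(x,x)$. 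By the natural correspondence \eqref{diag:univalence3}, this equivalence is recorded by a lift $u : X \to \mathsf{Equiv}$ of $(\alpha',\alpha)$. Symmetrically, $1_B : B \to B$ yields a lift $v : X \to \mathsf{Equiv}$ of $(\beta,\beta')$.

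Next I would compose. The given $\tilde e$ lies over $(\alpha,\beta)$, so $u$ and $\tilde e$ are composable (their classifications coincide at the middle entry $\alpha$), as is the result with $v$ (at $\beta$). Applying the transitivity map $\mathsf{Equiv}\times_\T\mathsf{Equiv}\to\mathsf{Equiv}$ twice yields a lift $\tilde e' : X \to \mathsf{Equiv}$ of $(\alpha',\beta')$. Because transitivity implements composition of equivalences, the underlying map of $\tilde e'$ is $1_B\circ e\circ 1_A = e$, so $\tilde e'$ exhibits $e : A \to B$ as an equivalence relative to the new classifying maps $\alpha',\beta'$, which is the assertion.

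The point requiring the most care is that the identity types $\Id^\alpha_A$ and $\Id^{\alpha'}_A$ genuinely differ --- each is a family over $A\times_X A$ obtained by pulling a universal identity type into $\TT\times_\T\TT$ along a different map induced by $\dot{\alpha}$ respectively $\dot{\alpha}'$ --- so the two equivalence structures cannot simply be identified; the transport must instead be routed through $1_A$, whose status as an equivalence rests precisely on the strict identity $1_A\circ 1_A = 1_A$ together with reflexivity. Correspondingly, the one nontrivial input is that the transitivity of $\mathsf{Equiv}\tto\T$ composes underlying maps in the expected way; this is part of the graph structure already established for $\mathsf{Equiv}\tto\T$ and reflects the usual fact that composites of type equivalences are again equivalences.
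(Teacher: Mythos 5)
Your proof is correct and follows essentially the same route as the paper's: both treat $1_A$ and $1_B$ as equivalences bridging the two classifications (yielding lifts of $(\alpha',\alpha)$ and $(\beta,\beta')$), and then invoke the composition/transitivity structure on $\mathsf{Equiv}\to\T\times\T$ (justified type-theoretically, cf.\ \cite[Theorem 4.7.1]{HoTTbook}) to produce the composite lift $\tilde{e}' = \ell(\beta,\beta')\cdot(\tilde{e}\cdot\ell(\alpha',\alpha))$. Your added care about the distinct identity types $\Id^{\alpha}_A$ versus $\Id^{\alpha'}_A$ simply makes explicit what the paper compresses into ``by the universal property of $\mathsf{Equiv}\to\T\times\T$''.
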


\begin{proof}
The map $1_A : A \to A$ over $X$ is an equivalence with respect to $\alpha: X \to \T$ and $\alpha': X \to \T$, in virtue of a lift $\ell(\alpha, \alpha') : X \to\mathsf{Equiv}$ of $(\alpha, \alpha') : X \to \T\times \T$, by the universal property of $\mathsf{Equiv} \to \T\times \T$, and the same is true, \emph{mutatis mutandis}, for  $1_B : B \to B$.  Now the structure $\mathsf{Equiv} \to \T\times \T$ can be shown to have a composition $(e\cdot e')$ operation in the expected way, using the rules of type theory \cite[Theorem 4.7.1]{HoTTbook}, which we know are sound when interpreted by the operations of the ML-algebra.  So we may define the desired lift ${\tilde{e}}' : X \to \mathsf{Equiv}$ to be e.g. $${\tilde{e}}' = \ell(\beta, \beta') \cdot ({\tilde{e}}\cdot\ell(\alpha', \alpha))\,.$$
\end{proof}
Lemma \ref{lemma:equivinvariance} tells us that we could work in the category $\EE_\t$ of classifiable types ``up to equivalence'', but we prefer to continue assuming chosen classifying maps when convenient, i.e.\ working instead in its ``presentation'' $\EE/_\T$ as explained in Remark \ref{remark:classifyingmaps}.

\subsection{A bicategory of polynomials}\label{sec:2cells}

Returning to the description of the proposed bicategory, we begin with the 1-category
\[
\EE^{\downarrow}_{\mathsf{cart}}\ \simeq\ \mathsf{PolyFun}(\EE, \EE)_{\mathsf{cart}}
\]
as in \eqref{diagram:polyandpolyfun}, with objects $f, g$ and arrows $(h, h') : f \to g$ as in \eqref{diag:cartsquare}, corresponding to polynomial endofunctors $\alg{P}_f, \alg{P}_g : \EE \to \EE$ and cartesian natural transformations $\vartheta : \alg{P}_f \Rightarrow \alg{P}_g$.

Now restrict $\EE^{\downarrow}_{\mathsf{cart}}$ to the (faithful) subcategory $U : \EE/_\T \to \EE^{\downarrow}_{\mathsf{cart}}$ of ``classified types'' determined by composing,
\begin{equation}\label{eq:faithfulU}
U : \EE/_\T\ \stackrel{\t^{\sharp}}{\simeq}\ \EE^{\downarrow}_{\mathsf{cart}} /_\t \stackrel{U_\t}{\too} \EE^{\downarrow}_{\mathsf{cart}}\,,
\end{equation}
by choosing as 0-cells only the canonical  pullbacks $a = \alpha^{*}\t$ and as 1-cells only those maps that commute with the associated classifying maps:
\begin{equation}\label{diag:classified1cells}
\begin{tikzcd}
A \ar[d,swap, "a"] \ar[r, "{\dot{h}}"] \pbcorner & B \ar[d, "b"] \\
X \ar[r,swap,swap, "{h}"] \ar[rd,swap, "\alpha"] &  Y \ar[d, "\beta"]\\
& \T
\end{tikzcd}
\end{equation}
%
%
%
%
For parallel 1-cells $h_1, h_2 : \alpha \to \beta$ as in \eqref{diag:classified1cells} (and thus both commuting with $\alpha$ and $\beta$), we shall define a 2-cell $h_1\Rightarrow h_2$ to be simply an auto-equivalence $A\simeq A$ of $p :A \to X$.
The definition can be motivated as follows: first pull the classifier $\mathsf{Equiv} \to \T\times\T$ for equivalences with codomain $\t$ from \eqref{diag:univalence3} back along the classifying map $\beta : Y\to \T$ for $b:B \to Y$, as indicated below, to obtain a classifier $\mathsf{Equiv}_B \to Y\times Y$ for equivalences over $\T$ with codomain $\beta : Y \to \T$.
\begin{equation}\label{eq:pulledbackclassifier}
\begin{tikzcd}
	\mathsf{Equiv}_B \ar[d] \ar[r]  \pbcorner & \mathsf{Equiv} \ar[d] \\  
	Y\times Y \ar[r,swap,"{\beta \times \beta}"] &  \T \times \T
 \end{tikzcd}
 \end{equation}

\begin{definition}\label{def:2cells}
For parallel 1-cells $h_1, h_2 : X \to Y$ over $\T$,
\begin{equation}\label{eq:parallelclassifiedfamilies}
\begin{tikzcd}
	 A \ar[d,swap, "a"] \ar[rr,bend right = 3ex, "{\dot{h_2}}"]  \ar[rr,bend left  = 3ex, "{\dot{h_1}}"]  && B \ar[d, "b"] \\  
	 X \ar[rr,swap,bend right = 3ex,swap, "{h_2}"] \ar[rr,bend left  = 3ex, "{h_1}"]  \ar[rd,swap, "\alpha"] &&  \ar[ld, "\beta"]  Y\\
	&\T&
 \end{tikzcd}
 \end{equation}
and thus commuting with the classifying maps $\alpha$ and $\beta$ for $a$ and $b$ respectively, a 2-cell $\varphi : h_1 \Rightarrow h_2$ is defined to be a map $\varphi : A\to A$ commuting with $a : A \to X$,
\begin{equation}\label{eq:parallelclassifiedfamilies2}
\begin{tikzcd}
 B  \ar[d,swap, "b"] &  \ar[l,swap, "{\dot{h_1}}"] A \ar[r, "{\varphi}"] \ar[d, "a"] \pbcornerright & A \ar[d, swap, "a"]  \ar[r, "{\dot{h_2}}"]   \pbcorner & B \ar[d, "b"] \\  
 Y &  \ar[l, "{h_1}"]  X \ar[r, equals] & X \ar[r,swap, "{h_2}"] &  Y
 \end{tikzcd}
 \end{equation}
which is, moreover, a pullback of the universal equivalence $B_1\simeq B_2$ over $\mathsf{Equiv}_B$ along a lift $\tilde{\varphi} : X \to \mathsf{Equiv}_B$ of  $(h_1, h_2) : X \to Y \times Y$, as indicated in the diagram below.
\begin{equation}\label{diag:2celldef3}
\begin{tikzcd} 
	&& \mathsf{B}_1 \ar[rd]  \ar[r,"{\simeq}"]  & \mathsf{B}_2 \ar[d] \\  
A \ar[rd,swap, "a"] \ar[r,swap, "{\varphi}"]  \ar[rru, dotted] & A \ar[d, "a"] \ar[rru, crossing over, dotted] && \mathsf{Equiv}_B \ar[d] \\  
& X \ar[rru, dotted, "{\tilde{\varphi}}"] \ar[rr, swap, "{(h_1, h_2)}"] && Y \times Y
	 \end{tikzcd}
 \end{equation}
\end{definition}

\begin{remark}\label{2cellsareglobal}
As stated, the definition appears to depend on the classifying map $\beta : Y\to \T$ for $b:B\to Y$. However,  this is not actually the case,
since $\mathsf{Equiv}_B$ is itself pulled back from the universal equivalence $E_1\simeq E_2$ as in \eqref{diag:univalence3}.  The 2-cells $\varphi : h_1 \Rightarrow h_2$ are thus simply auto-equivalences $\varphi: A \simeq A$ over $X$, labelled with the parallel 1-cells $h_1, h_2 : X \to Y$ that are their domain and codomain. The current formulation incorporates that data by associating them with lifts $\tilde{\varphi} : X \to \mathsf{Equiv}_B$ of $(h_1, h_2) : X \to Y \times Y$ across $\mathsf{Equiv}_B \to Y\times Y$. 
\end{remark}

%
%
 
\begin{theorem}\label{thm:MLAlgbicat}
For an ML-algebra $\t : \TT \to \T$ there is a monoidal bicategory $\EE/\!/_\T$ of type families over $\T$, with objects $\alpha : X\to \T$ and arrows $h : X \to Y$ over $\T$ as in $\EE/_\T$ and with 2-cells $\varphi : h_1 \Rightarrow h_2$ as in Definition \ref{def:2cells}.  
\end{theorem}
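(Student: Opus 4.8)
\emph{Proof plan.} The underlying $1$-category is $\EE/_\T$, which by \eqref{eq:faithfulU} sits faithfully inside the monoidal $1$-category $\EE^{\downarrow}_{\mathsf{cart}} \simeq \mathsf{PolyFun}(\EE, \EE)_{\mathsf{cart}}$ of \eqref{diagram:polyandpolyfun}, whose monoidal product is polynomial composition $\cdot$, closed up to the $1$- and $\Sigma$-structure of the ML-algebra so as to land again among classified type families. The plan is to equip this category with the $2$-cells of Definition \ref{def:2cells}, verify the bicategory axioms, and then promote the restricted product $\cdot$ to a monoidal structure whose associativity and unit constraints are exactly the ``missing'' monad laws, now realized as the type equivalences \eqref{typeisos}. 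The guiding principle throughout is the one from the introduction: since every $2$-cell and every structural constraint is obtained by \emph{pulling back a universal instance} built from $\t$ and its $\Id{}$-types, the laws to be checked hold because they interpret type (iso)equivalences provable in the internal type theory, which are sound by Theorem \ref{thm:nmcwf} and Proposition \ref{prop:Id-typesfromML}.

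\emph{Hom-categories and vertical composition.} For fixed $0$-cells $\alpha : X \to \T$ and $\beta : Y \to \T$, the objects of the hom-category are the maps $h : X \to Y$ over $\T$ and, by Remark \ref{2cellsareglobal}, a $2$-cell $h_1 \Rightarrow h_2$ is an auto-equivalence $\varphi : A \simeq A$ of $a = \alpha^{*}\t$ over $X$, tagged by its (parallel) boundary. Taking the $2$-cell to be the underlying map $\varphi$ (carrying the property of being an equivalence, which is essentially unique), I define vertical composition $\psi \bullet \varphi$ to be composition of these maps over $X$, with the witnessing lift to $\mathsf{Equiv}_B$ supplied by the transitivity operation on $\mathsf{Equiv}$ exactly as in the proof of Lemma \ref{lemma:equivinvariance}, and identity $2$-cell $1_A$ lifted by reflexivity. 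Associativity and unitality then hold \emph{strictly}, since the underlying maps compose strictly in $\EE$; closure — that a composite of equivalences is again an equivalence — is once more the soundness of the corresponding type equivalence.

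\emph{Horizontal composition and the composition constraints.} On $1$-cells, horizontal composition is strict composition of maps over $\T$. A $2$-cell $\psi : k_1 \Rightarrow k_2$ over $Y$ is whiskered by $h : X \to Y$ by pulling it back along $h$; since $h$ lies over $\T$ we have $h^{*}b = a$, so $h^{*}\psi$ is an auto-equivalence of $A$ over $X$, and the general horizontal composite $\psi * \varphi$ is assembled from whiskerings, the interchange law following from (pseudo)functoriality of pullback and strictness of vertical composition. Closure under this operation is the pullback-stability of equivalences recorded in \eqref{eq:pulledbackclassifier} and used in Lemma \ref{lemma:equivinvariance}. The canonical comparisons $h^{*}g^{*} \cong (gh)^{*}$ of chosen pullbacks furnish the (invertible) associativity and unit constraints, coherent by the universal property of pullback; this already yields the underlying bicategory (a $2$-category when the classifying pullbacks are chosen strictly functorially).

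\emph{Monoidal structure and the main obstacle.} The monoidal product is transported from $\cdot$ through the equivalence \eqref{diagram:polyandpolyfun}, with unit the monoidal unit for $\cdot$; its associator and unitors are \emph{not} identities but are precisely the $\Sigma$-associativity equivalence $\Sigma_{a}\Sigma_{b}C \simeq \Sigma_{(a,b)}C$ and the unit equivalences $\Sigma_{x:1}A \simeq A$, $\Sigma_{a:A}1 \simeq A$ of \eqref{typeisos}, read as invertible $2$-cells of $\EE/\!/_\T$. The main obstacle is coherence: one must verify the pentagonator and unit $2$-cells of a monoidal bicategory (equivalently, the tricategory axioms of its one-object suspension), not merely the underlying type isomorphisms. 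I would discharge this by the same universal-instance principle: each required coherence equation equates two composites of constraint $2$-cells, all of which are pullbacks of a single universal equivalence assembled from $\t$, $\Sigma$, and the $\Id{}$-structure, so the equation is the interpretation of a coherence identity between type equivalences — the Mac Lane pentagon for $\Sigma$-associativity and its unit triangles — which is provable in extensional (respectively intensional) dependent type theory with $1, \Sigma, \Pi, \mathsf{Eq}$ (respectively $\mathsf{Id}$), hence sound by Theorem \ref{thm:nmcwf} and Proposition \ref{prop:Id-typesfromML}. Naturality of the pullback of a universal instance then forces the two composites to agree, completing the monoidal bicategory.
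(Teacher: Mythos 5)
Your construction of the underlying bicategory is essentially the paper's own proof: that proof consists of exactly the observation you start from, namely that by Remark \ref{2cellsareglobal} the hom-categories are uniformly identified with the auto-equivalences $\mathsf{AutEquiv}_X(A)$ of the domain 0-cell, with everything else inherited from composition of those. But you silently suppress the one point the paper's proof is careful about: in the intensional case an equivalence is \emph{structure} (a lift $\tilde\varphi$ to $\mathsf{Equiv}_B$, i.e.\ inverses together with identity-type witnesses), not a mere property, and such data composes associatively only up to higher homotopy; so $\mathsf{AutEquiv}_X(A)$ is itself a higher groupoid, and the paper explicitly says one must \emph{either} formally truncate it \emph{or} accept that $\EE/\!/_\T$ is an $(\infty,1)$-category. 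Your phrase ``the property of being an equivalence, which is essentially unique'' is where this is hidden: the maps-with-property reading discards the lifts, but the coherence equations you need later (pentagon, unit triangles, interchange) are then equations between the underlying maps of canonical equivalences, which in the intensional case hold only propositionally --- up to homotopy --- and not as equations of maps in $\EE$. This is precisely why the paper's proof ends with the truncation/$(\infty,1)$ dichotomy rather than with a strictness claim.

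The genuine gap is in your monoidal step. You assert that the associator and unitors of the transported product are ``precisely'' the type equivalences \eqref{typeisos}, ``read as invertible 2-cells of $\EE/\!/_\T$''. This conflates two levels: in the paper those equivalences are the \emph{pseudomonoid and pseudomodule laws for $\t$ itself} --- invertible 2-cells witnessing equations between parallel 1-cells --- not the coherence constraints of the monoidal structure, which must be 1-cell \emph{equivalences} between the distinct objects $(\alpha\otimes\beta)\otimes\gamma$ and $\alpha\otimes(\beta\otimes\gamma)$. And there is an obstruction to realizing them as such: 1-cells of $\EE/\!/_\T$ are maps commuting \emph{strictly} over $\T$, whereas the classifying maps of $(\alpha\otimes\beta)\otimes\gamma$ and $\alpha\otimes(\beta\otimes\gamma)$ are built from $\Sigma(\Sigma(-,-),-)$ and $\Sigma(-,\Sigma(-,-))$ respectively, which agree only up to type equivalence, not equality --- so the canonical comparison is not a 1-cell of $\EE/\!/_\T$ at all. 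Accommodating triangles over $\T$ that commute only up to equivalence is exactly the purpose of the ``larger category'' $\EE\widetilde{/\!/}_\T$ of Remark \ref{remark:Anel}, which the paper notes is not even a 1-category in the intensional case and leaves for future work, just as it defers (rather than proves) the pseudomonoid verification ``entirely in type theory''. So the portion of your proposal that goes beyond the paper's proof --- the monoidal coherence --- is precisely the portion that does not go through as stated.
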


\begin{proof}
The 1-categorical structure of $\EE/_\T$ is clear, so we just need to verify that the hom-sets $\Hom(h_1, h_2)$ for $h_1, h_2 : X \to Y$ over $\T$  are categories in a way that is compatible with the 1-category structure of~$\EE/_\T$.  But this is immediate, since by Remark \ref{2cellsareglobal} these categories are uniformly isomorphic to the auto-equivalences of the domain 0-cell $a: A\to X$.  Note, however, that in the intensional case the category $\mathsf{AutEquiv}_X(A)$ of auto-equivalences may itself actually be a higher one, since equivalences of types need not form a (mere) 1-category.  In this case, the categories $\mathsf{AutEquiv}_X(A)$ would \emph{either} need to be formally truncated, in order for $\EE/\!/_\T$ to be a (mere) bicategory, \emph{or} explicitly described as higher groupoids, in which case $\EE/\!/_\T$ would be an $(\infty, 1)$-category.  See Remark \ref{remark:univalence} below for a related consideration.
\end{proof}

Finally, with respect to the monoidal bicategory $\EE/\!/_\T$ the ML-algebra structure \eqref{diag:MLalg2} on $\t : \TT\to\T$ itself can now be shown to be a pseudomonoid and a pseudomodule; the proof can be given entirely in type theory using the interpretation from Section \ref{sec:NaturalModels}, and simply amounts to verifying the familiar type equivalences \eqref{typeisos} (proofs of which can also be found in \cite[Chapter 2]{HoTTbook}).  We leave the detailed formal development as future work (in progress as part of the formalization project \cite{HoTTLean}).  We also leave for the future the task of transferring the higher structure back across the equivalence \eqref{diagram:polyandpolyfun} to the setting of $\mathsf{PolyFun}(\EE, \EE)_{\mathsf{cart}}$ to obtain a pseudomonad and a pseudoalgebra.

\begin{remark}\label{remark:Anel}
An alternative higher category of polynomials can be determined as follows. First, recall that homming into the weak $\infty$-groupoid $\mathsf{Equiv} \tto \T$ constructed in Section \ref{sec:equivalence} represents equivalences of types $\vartheta : A \simeq B$ as lifts of their classifying maps $\alpha, \beta : X \tto \T$ across the type of equivalences:
\begin{equation}\label{diag:equivdef2}
\begin{tikzcd} 
	&& \mathsf{E}_1 \ar[rd]  \ar[r,"{\simeq}"]  & \mathsf{E}_2 \ar[d] \\  
A \ar[rd,swap, "a"] \ar[r,swap, "{\vartheta}"]  \ar[r, "{\simeq}"]  \ar[rru, dotted] & B \ar[d, "b"] \ar[rru, crossing over, dotted] && \mathsf{Equiv} \ar[d] \\  
& X \ar[rru, dotted, "{\tilde{\vartheta}}"] \ar[rr, swap, "{(\alpha, \beta)}"] && \T \times \T
	 \end{tikzcd}
 \end{equation}
We can use these to define a ``larger category'' of polynomials $$\EE/_\T \to \EE\widetilde{/\!/}_\T$$ with the same objects $\alpha: X \to \T$ and $\beta : Y\to \T$, but as 1-cells we now take pairs $(f, \varphi) : \alpha \to \beta$ with $f : X \to Y$ and $\varphi : \alpha \simeq \beta f$.
\begin{equation}\label{diag:new1cells}
\begin{tikzcd}
X \ar[r,swap,swap, "{f}"] \ar[rd,swap, "\alpha"] \ar[dr, "{\stackrel{\varphi}{\simeq}}"]&  Y \ar[d, "\beta"] \\
& \T
\end{tikzcd}
\end{equation}
These correspond to commutative diagrams,
\begin{equation}\label{diag:new1cells2}
\begin{tikzcd}
 A \ar[r, "{\varphi}"] \ar[r,swap, "{\sim}"] \ar[d,swap, "a"] & A' \ar[d, swap, "a'"]  \ar[r, "{\dot{f}}"]   \pbcorner & B \ar[d, "b"] \\  
 X \ar[r, equals] & X \ar[r,swap, "{f}"] &  Y
 \end{tikzcd}
 \end{equation}
 with a pullback on the right and an equivalence over $X$ on the left (a choice of direction for $\varphi$ can be made here),
determining a (not necessarily cartesian) natural transformation of polynomial functors $(f, \varphi) : \alg{P}_a \to \alg{P}_b$.  Using these 1-cells, the ML-algebra $\TT\to\T$ should now determine a pseudomonad and a pseudoalgebra in the ``larger category" $\mathsf{PolyFun}_\T(\EE, \EE)_\mathsf{cart} \mono \mathsf{PolyFun}_\T(\EE, \EE)$ of polynomial functors over $\T$ and these new natural transformations.

The reason for the ``scare quotes'' in the previous sentence is that this is not really a 1-category, because the weakness of the $\infty$-groupoid $\mathsf{AutEquiv}_X(A)$ of auto-equivalences noted in the proof of Theorem \ref{thm:MLAlgbicat} was inherited from the internal weak $\infty$-groupoid  $\mathsf{Equiv} \tto \T$, and has now infected the homsets of $\mathsf{PolyFun}_\T(\EE, \EE)$, making e.g.\ composition no longer strictly associative, so we no longer have even a 1-category.  Although conceptually superior, this approach thus seems technically more challenging to execute rigorously, and so we leave it for future work.
\end{remark}

\begin{remark}[Univalence]\label{remark:univalence}

If the ML-algebra $\T$ is itself a universe type $\T : \T_1$ in a larger algebra $\TT_1 \to \T_1$ also with $\Id{}$-types, then there is the type family $A, B: \T \vdash \Id_{\T}(A,B)$, which is interpreted as a map $\Id_{\T}\to \T \times \T$.  Since for every $A:\T$ the map $1_A : A \to A$ is an equivalence, by $\Id_{\T}$-elim we obtain a distinguished map  $\Id_{\T}(A, B) \to \mathsf{Equiv}(A, B)$ over $\T \times \T$.  The \emph{univalence axiom} is the statement that this map is itself an equivalence:
\begin{equation}\label{diag:univalence4}
\begin{tikzcd} 
\Id_{\T} \ar[d] \ar[r, "{\simeq}"] & \mathsf{Equiv} \ar[d] \\  
 \T \times \T  \ar[r, equals] & \T \times \T
	 \end{tikzcd}
 \end{equation}
 Pulling the equivalence back along any $(A, B) : X \to \T \times \T$ results in the more familiar formulation:
 \[\tag{UA}
 \ (A =_\T B) \, \simeq\, (A \simeq B)
 \]
The interpretation of univalence thus requires a universe $\T$ with $1$, $\Sigma$, $\Pi$, $\Id{}$ (in order to define $A\simeq B$), inside a larger universe $\T:\T_1$ also with $1, \Sigma, \Pi, \Id{}$ (to have the family $A =_\T B$ and state the central equivalence in UA). The operations on the larger universe are usually required to restrict to the corresponding ones on the smaller universe.   

Of course, the same situation could also be described as a single univalent universe $\T$ with $1, \Sigma, \Pi, \Id{}$ in a (higher) category with $\Id{}$, etc.  When univalence is available, the monad and algebra laws become (internal) identities, rather than type equivalences.  This appears to be related to the approach recently taken in \cite{AberleSpivak:2024}.
\end{remark}

\begin{example}
It is instructive to work out the categories ${\EE/\!/}_\T$ and  ${\EE\widetilde{/\!/}}_\T$ in a familiar example. Thus consider Example \ref{example:HSuniverse} from Section \ref{sec:ExamplesML-Algebras} for the case $\kappa = \omega$ in the ambient lccc $\EE = \Set$.  

As the base object $\T$ we can take the set $\N$ of natural numbers, and as $\t:\TT\to\T$ the set of finite cardinal numbers $\NN = \{ [n]\,|\, n\in\N\}$, indexed by their cardinality via $\n : \NN \to \N$.   The polynomial endofunctor $$\alg{P}_\n : \Set \to \Set$$ is therefore:
\begin{align*}
\alg{P}_\n(X) &= \Sigma_{n:\N}X^{n} = 1 + X + X^2 + \dots \\
& = X^*
\end{align*}
where $X^*$ is the set of finite lists $\langle x_1, \dots, x_n\rangle$ of elements $x_i \in X$.   This is, of course, already known to be a (polynomial) monad, namely the free monoid monad \cite[Example 1.9]{GambinoKoch:2013}.  We leave it to the reader to work out the ML-algebra structure:
\begin{equation*}\label{diag:MLalgN}
\begin{tikzcd}
	1 \ar[r,"\{0\}"] \ar[d] \pbcorner & \NN \ar[d, "\n"]\\  
	1 \ar[r, swap,"1"] & \N
 \end{tikzcd}\qquad\qquad
 \begin{tikzcd}
	Q  \ar[r,"\sigma"] \ar[d,swap,"{\n\cdot\n}"] \pbcorner &  \NN \ar[d, "\n"]\\  
	\N^* \ar[r,swap, "\Sigma"] & \N
\end{tikzcd} \qquad\qquad
	 \begin{tikzcd}
	\NN^* \ar[d,swap,"{\n^*}"] \ar[r,"\lambda"] \pbcorner &  \NN \ar[d, "\n"]\\  
	\N^* \ar[r,swap, "\Pi"] & \N
 \end{tikzcd} 
 \end{equation*}
Because $\n : \NN \to \N$ is injective, the diagonal on the left below is an iso, and so the following is already a pullback when we take $\mathsf{Eq}$ to be the projection and $\refl$ the identity.
 \begin{equation*}\label{diag:EqtypesN}
\begin{tikzcd}
	\NN \ar[r,"\refl"] \ar[d,swap,"\delta"] &  \NN \ar[d, "\n"]\\  
	\NN \times_\N \NN \ar[r, swap,"\mathsf{Eq}"] & \N
 \end{tikzcd}\qquad \qquad 
 \end{equation*}
This algebra therefore has \emph{extensional} $\mathsf{Eq}$-types. 
It follows that in this case, the (internal) weak $\infty$-groupoid $\mathsf{Equiv} \tto \N$ constructed in \eqref{diag:univalence3} is an actual 1-groupoid, namely the skeleton of the groupoid of finite sets and bijections, the coproduct of the finite symmetric groups $\coprod_n\Sigma_n\to\N$. 

The bicategory $\Set/\!/_\N$ has as objects, maps $\alpha: X\to \N$ classifying canonical families  $a :A \to X$ with finite fibers and a chosen pullback square:
\begin{equation}\label{eq:classifiedfamiliesN}
\begin{tikzcd}
	 A \ar[d, swap, "a"] \pbcorner \ar[r, "\dot{\alpha}"] & \NN \ar[d, "\n"] \\  
	X \ar[r, swap, "\alpha"] & \N 
 \end{tikzcd}
 \end{equation}
 The 1-cells are maps $f : X\to Y$ commuting over $\N$.  A 2-cell between parallel such maps $f, g : \alpha \tto \beta$ as declared in Definition \ref{def:2cells} and used in Theorem \ref{thm:MLAlgbicat} is then a triple $(\vartheta, f, g)$ with $\vartheta : A \cong A$ an automorphism commuting with $a : A \to X$. 

Under the alternative approach of Remark \ref{remark:Anel}, in this case we have an actual 1-category $\Set\widetilde{/\!/}_\N$ with the same objects as $\Set/_\N$, but with 1-cells of the form 
\begin{equation}\label{diag:new1cells3ex}
\begin{tikzcd}
X \ar[r,swap,swap, "{f}"] \ar[rd,swap, "\alpha"] \ar[dr, "{\stackrel{\varphi}{\simeq}}"]&  Y \ar[d, "\beta"] \\
& \T
\end{tikzcd}
\end{equation}
corresponding to commutative diagrams,
\begin{equation}\label{diag:new1cells2ex}
\begin{tikzcd}
 A \ar[r, "{\varphi}"] \ar[r,swap, "{\cong}"] \ar[d,swap, "a"] & A' \ar[d, swap, "a'"]  \ar[r, "{\dot{f}}"]   \pbcorner & B \ar[d, "b"] \\  
 X \ar[r, equals] & X \ar[r,swap, "{f}"] &  Y
 \end{tikzcd}
 \end{equation}
with a pullback of canonical families of finite sets (as in the previous case) on the right, and an isomorphism over $X$ on the left.  Composition of such 1-cells is now strictly associative, because $\mathsf{Equiv} = \coprod_n\Sigma_n\tto \N$ is now a groupoid, as explained above.  Since the outer rectangle of \eqref{diag:new1cells2ex} is still a pullback, it determines a \emph{cartesian} natural transformation of polynomial functors $\alg{P}_a \to \alg{P}_b$. 
In this case, we therefore have an equivalence
\[
\Set\widetilde{/\!/}_\N\ \simeq\ \mathsf{PolyFun}_\N(\EE)_{\mathsf{cart}}
\]
with the 1-category of polynomial functors with finite fibers and cartesian natural transformations.  
%
%
%
%
%
\end{example}
 
%
%
%

\subsection*{Acknowledgements.}

\quad Thanks to Mathieu Anel, Corinthia Aberl\'e, Marcelo
Fiore, Nicola Gambino, Richard Garner, and Clive Newstead for ideas and advice.  This material is based upon work supported by the Air Force Office of Scientific Research under awards FA9550-21-1-0009 and FA9550-20-1-0305.

\bibliographystyle{alpha}
\newcommand{\etalchar}[1]{$^{#1}$}

\end{document}